\newtheorem{theorem}{Theorem}[section]
\newtheorem{lemma}[theorem]{Lemma}
\newtheorem{proposition}[theorem]{Proposition}
\newtheorem{corollary}[theorem]{Corollary}
\theoremstyle{definition}
\newtheorem{definition}[theorem]{Definition}
\newtheorem{example}[theorem]{Example}
\theoremstyle{remark}
\newtheorem{remark}[theorem]{Remark}
\newcommand{\A}{ \mathbb A}
\newcommand{\C}{ \mathbb C}
\newcommand{\e}{{\bf e}}
\newcommand{\gr}{{\rm gr}}
\newcommand{\Hilb}{ {\rm Hilb}}
\newcommand{\id}{ {\rm Id}}
\newcommand{\im}{ {\rm im}}
\newcommand{\kbf}{ {\bf k}}
\newcommand{\la}{ {\lambda}}
\newcommand{\N}{ \mathbb N}
\newcommand{\R}{ \mathbb R}
\newcommand{\T}{ \mathbb T}
\newcommand{\W}{\widetilde}
\newcommand{\Z}{ \mathbb Z}
\def\beq{\begin{equation}}
\def\eeq{\end{equation}}
\numberwithin{equation}{section}
\begin{document}
\title[$\N^d$-indexed persistence modules, partitions and rank invariants]
{$\N^d$-indexed persistence modules, higher dimensional partitions and rank invariants}

\author[Mehdi Nategh]{Mehdi Nategh
}
\address{Department of Mathematics, University of Missouri, Columbia, MO 65211, USA}
\email{nateghm@umsystem.edu}

\author[Zhenbo Qin]{Zhenbo Qin
}
\address{Department of Mathematics, University of Missouri, Columbia, MO 65211, USA}
\email{qinz@missouri.edu}

\author[Shuguang Wang]{Shuguang Wang
}
\address{Department of Mathematics, University of Missouri, Columbia, MO 65211, USA}
\email{wangs@missouri.edu}

\date{\today}
\keywords{Multiparameter persistence; $\N^d$-indexed persistence modules; 
Higher dimensional partitions; Rank invariants; Barcodes; Young diagrams}
\subjclass[2020]{Primary 55N31; Secondary 14C05, 05A17.}

\begin{abstract}
We study decomposable $\N^d$-indexed persistence modules via 
higher dimensional partitions. Their barcodes are defined in terms of 
the extended interior of the corresponding Young diagrams. 
For two decomposable $\N^d$-indexed persistence modules, we present 
a necessary and sufficient condition, in terms of the partitions, 
for their rank invariants to be the same. 
This generalizes the well-known fact that for an $\N$-indexed persistence module, 
its barcode and its rank invariant determine each other, i.e., 
the rank invariant is a complete invariant.
\end{abstract}

\maketitle
\section{\bf Introduction}   
\label{section_introduction}

A fundamental structure theorem asserts that every one-dimensional persistence module 
admits a unique decomposition up to isomorphisms \cite{CV, EH}. 
This gives rise to the concept of barcodes which plays a pivotal role in 
topological data analysis. It is known from the pioneering work \cite{CZ} that 
such structure theorem is no longer true for higher dimensional persistence modules. 
Extensive research has been devoted to higher dimensional persistence modules 
in recent years (see \cite{AL, BR, KNCB, Les, LS, Vip} and the references therein). 
Higher dimensional persistence modules have found important applications 
in the study of noisy point cloud data and time-varying data.

In this paper, we study $\N^d$-indexed persistence modules over a field $k$ via 
$d$-dimensional partitions, where $\N$ denotes the set of nonnegative integers. 
To motivate our concepts to be introduced below, let us look at the case $d=1$. 
It is well-known that the barcode of an $\N$-indexed persistence module over $k$ 
is a multiset consisting of some intervals of the form $[a, b) = \T_a [0, b-a)$ 
where $a \in \N$, $b \in \N \sqcup \{+\infty\}$, 
and $\T_a: \N \to \N$ is the translation by $a$. 
The closed interval $[0, b-a]$ is precisely the Young diagram of 
the $1$-dimensional partition $(b-a)_O$ corresponding to $b-a$ 
where $O \in \N^0 = \{ O \}$, while the interval $[0, b-a)$ 
may be regarded as the extended interior of the Young diagram $[0, b-a]$.
 
For a general integer $d \ge 1$, a {\it $d$-dimensional partition} $\lambda$ is 
an array 
$$
\lambda = (\lambda_{i_1, \ldots, i_{d-1}})_{i_1, \ldots, i_{d-1}}
$$
of $\lambda_{i_1, \ldots, i_{d-1}} \in \N \sqcup \{+\infty\}$ indexed by 
$(i_1, \ldots, i_{d-1}) \in \N^{d-1}$ such that 
$$
\lambda_{i_1, \ldots, i_{d-1}} \ge \lambda_{j_1, \ldots, j_{d-1}}
$$
if $i_1 \le j_1, \ldots, i_{d-1} \le j_{d-1}$. 
For a $d$-dimensional partition $\la$, the {\it extended interior} of 
its Young diagram $D_\la \subset (\R^+)^d$ is the region
$$
D_\la^{\rm ei} = D_\la - \overline{(\partial D_\la) \cap (\R^+)^d}.
$$
Put $D_\la^{\rm int} = D_\la^{\rm ei} \cap \N^d$ which is the set of integral points 
in $D_\la^{\rm ei}$. Geometrically, $D_\la^{\rm ei}$ is obtained from 
the Young diagram $D_\la$ by removing its boundary in $(\R^+)^d$, 
and $D_\la^{\rm int}$ consists of all the integral points in $D_\la^{\rm ei}$.

\begin{example}   \label{example-Dlaei}
For the $2$-dimensional partitions $\la=(3, 3, 1)$, the extended interior 
$D_\la^{\rm ei}$ of $D_\la$ is illustrated by Figure~1 below. Note that 
$$
D_\la^{\rm int} = \{(0,0), (1, 0), (2, 0), (0,1), (1, 1), (0, 2), (1, 2)\}
$$ 
which consists of all the integral points in $D_\la^{\rm ei}$.
\begin{center}
	\begin{tikzpicture}[x=0.5cm,y=0.5cm,step=0.5cm]
		\fill[blue!30] (0,3) -- (2,3) -- (2,1) -- (3,1) -- (3,0) -- (0,0) -- cycle;
		\draw[->] (0,0) -- (4,0);
		\draw[->] (0,0) -- (0,4);
		=========================
		\draw[dashed] (0,3) -- (2,3);
		\draw[dashed] (2,3) -- (2,1);
		\draw[dashed] (2,1) -- (3,1);
		\draw[dashed] (3,1) -- (3,0);
		\filldraw[fill=white] (0,3) circle (2pt);
		\filldraw[fill=white] (3,0) circle (2pt);
		
		\node at (-0.4,-0.4) {0};
		\foreach \x in {1,2}
		{
			\node[fill,circle, inner sep=1pt,label=below:$\x$] at (\x,0) {};
			\node[fill, circle, inner sep=1pt,label=left:$\x$] at (0,\x) {};
		}
		\node[circle, inner sep=1pt, label=left:3] at (0,3) {};
		\node[circle, inner sep=1pt, label=below:3] at (3,0) {};
	\end{tikzpicture}

\end{center}
\centerline{\footnotesize \text{\bf Figure 1. } \text{$D_\la^{\rm ei}$ for a 
		2-dimensional partition of $ n = 7 $.}}
\end{example}
 
Define the $\N^d$-indexed persistence module $\kbf_\la$ by
$$
  (\kbf_\la)_x 
= \begin{cases}
	k &\text{if $x \in D_\la^{\rm int}$}, \\0 &\text{otherwise}.
  \end{cases}    
$$
For $x, y \in \N^d$ with $x \le y$, the morphism $(\kbf_\la)_{x, y}: 
(\kbf_\la)_x \to (\kbf_\la)_y$ is the identity map $\id_k$ if 
$x,y \in D_\la^{\rm int}$ and is $0$ otherwise.

We define that an $\N^d$-indexed persistence module $M$ admits a {\it barcode} if 
$$ 
M \cong \bigoplus_{i \in \Lambda} \T_{a^{(i)}}\kbf_{\la^{(i)}}
$$
where the index set $\Lambda$ is finite, 
and for each $i \in \Lambda$, $a^{(i)} \in \N^{d}$, 
$\T_{a^{(i)}}: \N^d \to \N^d$ is the translation associated to $a^{(i)}$, and 
$\la^{(i)}$ is a $d$-dimensional partition with nonzero size $|\la^{(i)}| \ne 0$. 
In this case, the {\it barcode} ${\mathfrak B}_M$ of $M$ is defined 
to be the multiset whose elements are
$$
\T_{a^{(i)}} (D^{\rm ei}_{\la^{(i)}}), \quad i \in \Lambda.
$$
Criterions and algorithms for determining whether 
an $\N^d$-indexed persistence module 
admits a barcode have been investigated in \cite{Nat}. 

Since a general higher dimensional persistence module may not admit a barcode, 
the rank invariant was introduced in \cite{CZ} as an alternative discrete invariant.
The rank invariant of an $\N^d$-indexed persistence module $M$ is 
the function ${\rm Rank}^M: (\N^d)^{\le} \to \N$ given by
$$
{\rm Rank}^M(x, y) = \text{\rm Rank}(M_{x,y})
$$
where $(\N^d)^{\le} = \{(x,y) \in \N^d \times \N^d|x \le y\}$. 
Indeed, Carlsson and Zomorodian \cite[Theorem~12]{CZ} proved that 
when $d=1$, the barcode and the rank invariant determine each other, i.e., 
the rank invariant is a complete invariant. 
However, when $d > 1$, no prior necessary and sufficient conditions 
for determining the rank invariant are known. 
Our main result in this paper provides a necessary and sufficient condition 
(in terms of the parts in the partition) for determining the rank invariant when 
the $\N^d$-indexed persistence module admits a barcode. 
When $d=1$, our necessary and sufficient condition exactly says that 
the barcode and the rank invariant determine each other.

\begin{theorem}  \label{Intro_ThmRk-inv-MN}
Let $d \ge 1$. 
Let $M$ and $N$ be $\N^d$-indexed persistence modules admitting barcodes:
$$
M = \bigoplus_{i \in \Lambda_1} \T_{a^{(i)}}\kbf_{\la^{(i)}}
\qquad
\text{and}
\qquad
N = \bigoplus_{\ell \in \Lambda_2} \T_{b^{(\ell)}}\kbf_{\mu^{(\ell)}}
$$
where $|\la^{(i)}| \ne 0$ and $|\mu^{(\ell)}| \ne 0$ for all 
$i \in \Lambda_1$ and $\ell \in \Lambda_2$. Then, ${\rm Rank}^M = {\rm Rank}^N$ 
if and only if for every $(i_1, \ldots, i_{d-1}) \in \N^{d-1}$, the two multisets 
\begin{eqnarray}    \label{Intro_ThmRk-inv-MN.01}
\Big \{\big (a^{(i)}, (\la^{(i)})_{i_1, \ldots, i_{d-1}} \big )
   |i \in \Lambda_1 \text{ and } (\la^{(i)})_{i_1, \ldots, i_{d-1}} > 0 \Big \}
\end{eqnarray}
and 
\begin{eqnarray}    \label{Intro_ThmRk-inv-MN.02}
\Big  \{ \big (b^{(\ell)}, (\mu^{(\ell)})_{i_1, \ldots, i_{d-1}} \big )
   |\ell \in \Lambda_2 \text{ and } 
   (\mu^{(\ell)})_{i_1, \ldots, i_{d-1}} > 0 \Big \}
\end{eqnarray}
are equal. 
\end{theorem}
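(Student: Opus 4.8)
The plan is to reduce the theorem to a single counting identity for the rank invariant. For $v = (v_1, \ldots, v_d) \in \N^d$ write $\bar v = (v_1, \ldots, v_{d-1}) \in \N^{d-1}$. From the description of $D_\la$ and of its boundary in $(\R^+)^d$ one first records that
$$
D_\la^{\rm int} = \{ p \in \N^d : 0 \le p_d \le \la_{\bar p} - 1 \},
$$
and that this set is downward closed in the product order on $\N^d$: if $q \le p$ and $p \in D_\la^{\rm int}$, then $\la_{\bar q} \ge \la_{\bar p} > p_d \ge q_d$. Consequently, for $a \in \N^d$ and $x \le y$ in $\N^d$, the transition map $(\T_a \kbf_\la)_{x,y}$ is $\id_k$ precisely when $a \le x$ and $\la_{\overline{y - a}} > y_d - a_d$ (the relation $a \le x \le y$ already forces $y - a \in \N^d$, and downward closedness of $D_\la^{\rm int}$ then gives $x - a \in D_\la^{\rm int}$ automatically), and is $0$ otherwise. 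Since rank is additive over direct sums and each summand $(\T_{a^{(i)}}\kbf_{\la^{(i)}})_{x,y}$ has rank $0$ or $1$, grouping the indices $i \in \Lambda_1$ by the value $a = a^{(i)}$ yields, for all $x \le y$,
$$
{\rm Rank}^M(x,y) \;=\; \sum_{a \le x} g^M\!\big(a;\ \overline{y - a};\ y_d - a_d\big),
\qquad
g^M(a; c; t) := \#\{ i \in \Lambda_1 : a^{(i)} = a,\ (\la^{(i)})_c > t \},
$$
for $a \in \N^d$, $c \in \N^{d-1}$, $t \in \N$; the analogous identity holds for $N$ with $g^N$.

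Next I would observe that the multiset condition of the theorem is precisely the statement that $g^M(a;c;t) = g^N(a;c;t)$ for all $a, c, t$. Fix $c = (i_1, \ldots, i_{d-1})$. The part of the multiset \eqref{Intro_ThmRk-inv-MN.01} with first coordinate $a$ is $\{a\} \times V^M_{a,c}$, where $V^M_{a,c}$ is the finite multiset of those values $(\la^{(i)})_c$ with $a^{(i)} = a$ and $(\la^{(i)})_c > 0$; and $t \mapsto g^M(a; c; t)$ is exactly the tail count $t \mapsto \#\{ v \in V^M_{a,c} : v > t \}$, which recovers $V^M_{a,c}$ and is recovered from it (the multiplicity of $t+1$ in $V^M_{a,c}$ is $g^M(a;c;t) - g^M(a;c;t+1)$, the multiplicity of $+\infty$ is $\lim_{t \to \infty} g^M(a;c;t)$, which stabilizes because $\Lambda_1$ is finite, and $g^M(a;c;0)$ is the total size). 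Hence the multisets \eqref{Intro_ThmRk-inv-MN.01} and \eqref{Intro_ThmRk-inv-MN.02} agree for every $(i_1, \ldots, i_{d-1})$ if and only if $g^M \equiv g^N$. With this in hand, the ``if'' direction is immediate: substituting $g^M = g^N$ into the sum formula above gives ${\rm Rank}^M(x,y) = {\rm Rank}^N(x,y)$ for all $x \le y$.

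For the ``only if'' direction I would assume ${\rm Rank}^M = {\rm Rank}^N$ and prove $g^M(a;c;t) = g^N(a;c;t)$ for all $a, c, t$ by induction on $|a| = a_1 + \cdots + a_d$. Given $a^* \in \N^d$, $c \in \N^{d-1}$, $t \in \N$, apply the sum formula to the probe pair $x = a^*$ and $y = (a^*_1 + c_1, \ldots, a^*_{d-1} + c_{d-1},\ a^*_d + t)$, which indeed satisfies $x \le y$. In the resulting sum over $a \le a^*$, the diagonal term $a = a^*$ equals $g^M(a^*; c; t)$, while every other term has $a \le a^*$ with $a \ne a^*$, hence $|a| < |a^*|$; thus
$$
g^M(a^*; c; t) \;=\; {\rm Rank}^M(x,y) \;-\; \sum_{\substack{a \le a^* \\ a \ne a^*}} g^M\!\big(a;\ \overline{a^* - a} + c;\ a^*_d - a_d + t\big),
$$
and the identical formula holds with $M$ replaced by $N$. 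When $|a^*| = 0$ the correction sum is empty, so $g^M(a^*;c;t) = {\rm Rank}^M(x,y) = {\rm Rank}^N(x,y) = g^N(a^*;c;t)$; when $|a^*| \ge 1$, the correction sums for $M$ and $N$ coincide by the inductive hypothesis (each term there is a value of $g$ at an index of strictly smaller $|\cdot|$) and ${\rm Rank}^M(x,y) = {\rm Rank}^N(x,y)$, so again the two sides agree. This closes the induction, and by the previous paragraph the two multisets agree for every $(i_1, \ldots, i_{d-1})$, completing the proof.

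The step I expect to require the most care is the choice of probe pair $x = a^*$, $y = a^* + (c_1, \ldots, c_{d-1}, t)$ in the last paragraph: it simultaneously pins the ``slice index'' $\overline{y - a}$ to the target value $c$ on the diagonal term and pushes every other term down to a strictly smaller value of $|a|$, which is exactly what makes the induction go through; everything else is bookkeeping. The one technical point hidden in the first paragraph is the explicit description and downward closedness of $D_\la^{\rm int}$ — this is what makes ${\rm Rank}^{\T_a \kbf_\la}(x,y)$ depend on $y$ only through the comparison of $y_d - a_d$ with the single part $(\la)_{\overline{y - a}}$, rather than through the whole shape of $\la$.
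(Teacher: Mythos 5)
Your argument is correct, and the forward (``if'') direction is essentially the paper's: your identity ${\rm Rank}^M(x,y)=\sum_{a\le x}g^M(a;\overline{y-a};y_d-a_d)$ is exactly Lemma~\ref{Lemma2_GoodBarcode} combined with Lemma~\ref{xyTaLa}, grouped by the translation vector. The converse is where you genuinely diverge. The paper isolates a minimal translation vector $a^{(1)}$ in the combined multiset $\{a^{(i)}\}\cup\{b^{(\ell)}\}$, splits $M=M'\oplus M''$ and $N=N'\oplus N''$ accordingly, proves a Claim that ${\rm Rank}^{\W M}={\rm Rank}^{\W N}$ and ${\rm Rank}^{M''}={\rm Rank}^{N''}$, handles the origin-based pieces by the separate Lemma~\ref{rk-inv-MN} (which recovers the parts $\la^{(i)}_{i_1,\ldots,i_{d-1}}$ by comparing successive distinct values $a_1>a_2>\cdots$ of the rank along a vertical ray), and then inducts on the remaining summands. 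You instead invert the sum formula directly: induction on $|a^*|$ with the probe pair $x=a^*$, $y=a^*+(c,t)$ isolates the diagonal term $g^M(a^*;c;t)$ modulo correction terms already controlled by the inductive hypothesis, and your tail-count observation ($V^M_{a,c}$ is determined by $t\mapsto g^M(a;c;t)$ and vice versa, with finiteness of $\Lambda_1$ handling the $+\infty$ parts) subsumes Lemma~\ref{rk-inv-MN} entirely. This is in effect a M\"obius-inversion over the poset $\N^d$; it buys a shorter, more self-contained converse that avoids the module decomposition and the separate base-case lemma, at the cost of being less structural (the paper's route makes visible that rank equality of $M$ and $N$ forces rank equality of the corresponding origin-anchored and residual pieces, which is of some independent interest). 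One small point worth making explicit if you write this up: the equivalence ``multisets \eqref{Intro_ThmRk-inv-MN.01} and \eqref{Intro_ThmRk-inv-MN.02} agree for all $(i_1,\ldots,i_{d-1})$ iff $g^M\equiv g^N$'' uses that the first coordinate $a$ separates the multiset into the blocks $\{a\}\times V^M_{a,c}$, which you state but should not leave implicit.
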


The main idea in the proof of Theorem~\ref{Intro_ThmRk-inv-MN} is to use 
induction on the sizes of $M$ and $N$. We remark that when $d > 1$, 
under the conditions of Theorem~\ref{Intro_ThmRk-inv-MN}, 
${\rm Rank}^M = {\rm Rank}^N$ does not imply that $M$ and $N$ have the same barcode. 
In other words, when $d > 1$, the rank invariant is not a complete invariant 
for decomposable $\N^d$-indexed persistence modules. 
It would be interesting to see how to strengthen the assumption 
${\rm Rank}^M = {\rm Rank}^N$ in Theorem~\ref{Intro_ThmRk-inv-MN} so that 
the decomposable $\N^d$-indexed persistence modules $M$ and $N$ are 
guaranteed to have the same barcode. 

The paper is organized as follows. In Section~\ref{section_Higher}, 
higher dimensional partitions and Young diagrams are reviewed. 
We define $d$-dimensional barcodes via the extended interiors of Young diagrams. 
Section~\ref{section_MPP} is devoted to $\N^d$-indexed persistence modules. 
In Section~\ref{section_rank}, we prove Theorem~\ref{Intro_ThmRk-inv-MN} 
(= Theorem~\ref{ThmRk-inv-MN}).

\section{\bf Higher dimensional partitions and barcodes}   
\label{section_Higher}

\begin{definition}  \label{d-part}
Let $\N$ be the set of {\it nonnegative} integers. 
Let $d \ge 1$ be an integer. 

\begin{enumerate}
\item[{\rm (i)}] 
When $d \ge 2$, a {\it $d$-dimensional partition} $\lambda$ is 
an array 
$$
\lambda = (\lambda_{i_1, \ldots, i_{d-1}})_{i_1, \ldots, i_{d-1}}
$$
of $\lambda_{i_1, \ldots, i_{d-1}} \in \N \sqcup \{+\infty\}$ indexed by 
$(i_1, \ldots, i_{d-1}) \in \N^{d-1}$ such that 
$$
\lambda_{i_1, \ldots, i_{d-1}} \ge \lambda_{j_1, \ldots, j_{d-1}}
$$
if $i_1 \le j_1, \ldots, i_{d-1} \le j_{d-1}$. For $n \in \N \sqcup \{+\infty\}$, 
define the unique {\it $1$-dimensional partition} of $n$ to be 
$\lambda = (n)_O$ indexed by $O \in \N^0 = \R^0 = \{O\}$.

\item[{\rm (ii)}] 
The {\it size} $|\la|$ of a partition $\la$ is defined to be 
$
|\la| = \sum_{i_1, \ldots, i_{d-1}} \lambda_{i_1, \ldots, i_{d-1}}.
$
If $|\la| = n \in \N \sqcup \{+\infty\}$, then $\la$ is called
a {\it partition of $n$} and denoted by $\lambda \vdash n$. 

\item[{\rm (iii)}] 
For $n \in \N \sqcup \{+\infty\}$, the set of $d$-dimensional partitions 
of $n$ is denoted by $\mathcal P_d(n)$. Define $P_d(n)$ to be the number of $d$-dimensional partitions of $n$. 
\end{enumerate}
\end{definition}

\begin{remark}  \label{d=2}
The ordinary partitions are $2$-dimensional partitions 
(of nonnegative integers) in our sense.
\end{remark} 

One sees immediately that the generating function for $P_2(n)$ is given by
\begin{eqnarray}   \label{P2n}
	\sum_{n = 0}^{+\infty} P_2(n) q^n = \prod_{n = 1}^{+\infty}  \frac{1}{1-q^n}
\end{eqnarray}
where $q$ is a formal variable. A well-known result of McMahon \cite{And} states that
\begin{eqnarray}   \label{P3n}
	\sum_{n = 0}^{+\infty} P_3(n) q^n 
	\,\, = \,\, \prod_{n = 1}^{+\infty}  \frac{1}{(1-q^n)^n}.
\end{eqnarray}
There is no analogous formula for $P_d(n)$ when $d > 3$.

Fix $n \in \N$ and the field $k = \C$. The group $(k^*)^d$ acts 
on $k[t_1, \ldots, t_d]$ via 
$$
(k_1, \ldots, k_d) (t_1, \ldots, t_d) = (k_1t_1, \ldots, k_dt_d)
$$
where $k^* = k-\{0\}$ and $(k_1, \ldots, k_d) \in (k^*)^d$. 
It induces a $(k^*)^d$-action 
on the Hilbert scheme $\Hilb^n(\A_k^d)$ parametrizing length-$n$ 
$0$-dimensional closed subschemes of $\A_k^d = 
{\rm Spec} \, k[t_1, \ldots, t_d]$ (see \cite{Qin}). A $d$-dimensional partition 
$\lambda = (\lambda_{i_1, \ldots, i_{d-1}})_{i_1, \ldots, i_{d-1}}$ of 
$n$ determines a $(k^*)^d$-invariant ideal 
$$
  I 
= \big \langle t_1^{i_1} \cdots t_{d-1}^{i_{d-1}} 
  t_d^{\lambda_{i_1, \ldots, i_{d-1}}}| 
  (i_1, \ldots, i_{d-1}) \in \N^{d-1} \big \rangle 
$$
of $k[t_1, \ldots, t_d]$
with co-length $n$ (i.e, $\dim_k k[t_1, \ldots, t_d]/I = n$). 
In this way, the set of $d$-dimensional partitions of $n$ is in 
one-to-one correspondence with the set of $(k^*)^d$-invariant 
ideals of $k[t_1, \ldots, t_d]$ with co-length $n$, which in turn is 
in one-to-one correspondence with the set of $(k^*)^d$-invariant 
points in $\Hilb^n(\A_k^d)$.

\begin{definition}  \label{Youngdiagram}
	Let $\la$ be a $d$-dimensional partition and $a = (a_1, \ldots, a_d) 
	\in \N^d$.
	\begin{enumerate}
		\item[{\rm (i)}] 
		The {\it Young diagram $D_\la$} of 
		$\lambda$ is the subset of $\R^d$ obtained by stacking 
		$\lambda_{i_1, \ldots, i_{d-1}}$ many $d$-dimensional unit boxes 
		over the $(d-1)$-dimensional rectangular region in $\R^{d-1} \subset 
		\R^d$ spanned by the vertices 
		$$
		(i_1, \ldots, i_{d-1}), (i_1, \ldots, i_{d-1}) + \e_1, \ldots, 
		(i_1, \ldots, i_{d-1}) + \e_{d-1}
		$$
		where $\{\e_1, \ldots, \e_{d-1}\}$ denotes the standard basis of 
		$\R^{d-1}$ and $\R^{d-1}$ is embedded in $\R^d$ by taking 
		the last coordinate to be $0$.
		
		\item[{\rm (ii)}] The {\it set of integral points inside $D_\la$} is 
		defined by
		\begin{eqnarray}   \label{Youngdiagram.1}
			D_\la^{\rm int} 
			= \{(i_1, \ldots, i_{d-1}, h) \in \N^d|h < \lambda_{i_1, \ldots, i_{d-1}}\}
			\subset D_\la. 
		\end{eqnarray} 
		The {\it extended interior} of $D_\la$ is defined to be
		\begin{eqnarray}   \label{Youngdiagram.2}
			D_\la^{\rm ei} 
			= D_\la - \overline{(\partial D_\la) \cap (\R^+)^d}.
		\end{eqnarray}
		
		\item[{\rm (iii)}] 
		Define $\T_a: \R^d \to \R^d$ to be the translation of $\R^d$ by $a$.
	\end{enumerate}
\end{definition}

\begin{remark}   \label{Dlaint-ei}
For a $d$-dimensional partition $\la$, we have 
$D_\la^{\rm int} = D_\la^{\rm ei} \cap \N^d$.
\end{remark}

When $d=2$ and $\lambda \vdash n \in \N$, $D_\la$ is the usual Young 
diagram (up to some rotation) of the $2$-dimensional (i.e, usual) 
partition $\la$. When $\la = (+\infty)_{i_1, \ldots, i_{d-1}}$ 
(i.e, $\la_{i_1, \ldots, i_{d-1}} = +\infty$ for every 
$(i_1, \ldots, i_{d-1}) \in \N^{d-1}$), we have $D_\la = \R^d$.

\begin{example}
Recall from Definition~\ref{d-part}~(i) the notation $(n)_O$ for 
$n \in \N \sqcup \{+\infty\}$.
In Figure~2 and Figure~3 below, we present two sets of examples of 
$\T_a(D_{\la})$ for $d=1$ and $d=2$ respectively.
\begin{eqnarray*}
	&
	\begin{tikzpicture}[x=0.5cm,step=0.5cm]
		\draw[-, very thick, blue] (0,0) -- (3,0);
		\draw[->] (3,0) -- (7,0);
		\foreach \x in {0,1,2,3,4,5,6}
		{
			\node[fill,circle,inner sep=0.5pt,label=below:$\x$] at (\x,0) {};
		}
		\fill[blue] (0,0) circle[radius=2pt]; 
		\draw[blue] (3,0) circle[radius=2pt]; 
	\end{tikzpicture}
	\qquad
	\begin{tikzpicture}[x=0.5cm,step=0.5cm]
		\draw[-] (0,0) -- (2,0);
		\draw[-, very thick, blue] (2,0) -- (5,0);
		\draw[->] (5,0) -- (7,0);
		\foreach \x in {0,1,2,3,4,5,6}
		{
			\node[fill,circle,inner sep=0.5pt,label=below:$\x$] at (\x,0) {};
		}
		\fill[blue] (2,0) circle[radius=2pt]; 
		\draw[blue] (5,0) circle[radius=2pt]; 
	\end{tikzpicture}
	\qquad
	\begin{tikzpicture}[x=0.5cm,step=0.5cm]
		\draw[-] (0,0) -- (2,0);
		\draw[->, very thick, blue] (2,0) -- (7,0);
		\draw[->] (2,0) -- (6,0);
		\foreach \x in {0,1,2,3,4,5,6}
		{
			\node[fill,circle,inner sep=0.5pt,label=below:$\x$] at (\x,0) {};
		}
		\fill[blue] (2,0) circle[radius=2pt]; 
	\end{tikzpicture}
	&  
\end{eqnarray*}
\centerline{\footnotesize \text{{\bf Figure 2.} 
		For the $1$-dimensional partitions 
		$\la=(3)_O$ and $\mu=(+\infty)_O$,}}
\centerline{\footnotesize \text{the left is $D_\la$, 
		the middle is $\T_2(D_\la)$ and the right is $\T_2(D_\mu)$.}}

\begin{eqnarray*}
	&
	\begin{tikzpicture}[x=0.5cm,y=0.5cm,step=0.5cm]
		\draw[->] (0,0) -- (7,0);
		\draw[->] (0,0) -- (0,7);
		\draw[-] (0,3) -- (1,3);
		\draw[-] (1,3) -- (1,1);
		\draw[-] (1,1) -- (2,1);
		\draw[-] (2,1) -- (2,0);
		\draw[fill=blue!30] (0,0) -- ++(0,3) -- ++(1,0) -- ++(0,-2) -- ++(1,0) 
		-- ++(0,-1) -- ++(-2,0);
		\node at (-0.4,-0.4) {0};
		\foreach \x in {1,2,3,4,5,6}
		{
			\node[fill,circle,inner sep=1pt,label=below:$\x$] at (\x,0) {};
			\node[fill,circle,inner sep=1pt,label=left:$\x$] at (0,\x) {};
		}
	\end{tikzpicture}
	\qquad
	\begin{tikzpicture}[x=0.5cm,y=0.5cm,step=0.5cm]
		\fill [blue!30] (0,0) rectangle (1,7);
		\fill [blue!30] (1,0) rectangle (2,4);
		\fill [blue!30] (2,0) rectangle (3,2);
		\fill [blue!30] (3,0) rectangle (7,1);
		\draw[->] (0,0) -- (7,0);
		\draw[->] (0,0) -- (0,7);
		\draw[dashed] (3,1) -- (1,1) -- (1,4);
		\draw[-] (7,1) -- (3,1) -- (3,2) -- (2,2) -- (2,4) -- (1,4) -- (1,7);
		\node at (-0.4,-0.4) {0};
		\foreach \x in {1,2,3,4,5,6}
		{
			\node[fill,circle,inner sep=1pt,label=below:$\x$] at (\x,0) {};
			\node[fill,circle,inner sep=1pt,label=left:$\x$] at (0,\x) {};
		}
	\end{tikzpicture}
	\qquad
	\begin{tikzpicture}[x=0.5cm,y=0.5cm,step=0.5cm]
		\fill [blue!30] (2,1) rectangle (3,7);
		\fill [blue!30] (3,1) rectangle (4,5);
		\fill [blue!30] (4,1) rectangle (5,3);
		\fill [blue!30] (5,1) rectangle (7,2);
		\draw[->] (0,0) -- (7,0);
		\draw[->] (0,0) -- (0,7);
		\draw[-] (7,1) -- (2,1) -- (2,7);
		\draw[dashed] (5,2) -- (3,2) -- (3,5);
		\draw[-] (7,2) -- (5,2) -- (5,3) -- (4,3) -- (4,5) -- (3,5) -- (3,7);
		\node at (-0.4,-0.4) {0};
		\foreach \x in {1,2,3,4,5,6}
		{
			\node[fill,circle,inner sep=1pt,label=below:$\x$] at (\x,0) {};
			\node[fill,circle,inner sep=1pt,label=left:$\x$] at (0,\x) {};
		}
	\end{tikzpicture}
	&  
\end{eqnarray*}
\centerline{\footnotesize \text{{\bf Figure 3.} 
For the $2$-dimensional partitions 
$\la=(3, 1)$ and $\mu = (+\infty, 4,2,1,1, \ldots)$,}}
\centerline{\footnotesize \text{the left is $D_\la$, 
the middle is $D_\mu$ and the right is $\T_{(2,1)}(D_\mu)$.}}
\end{example}

\begin{example}
Figure 4 illustrates Young diagram of a 3-dimensional partition 
of $ n = 29 $ as  $ \la = (\la_{(a,b)}) $, $ (a,b) \in  \N^{3} $, where $ \la_{(a,b)} $ is defined by
\begin{equation}
	\la_{(a,b)} =
	\begin{cases}
		4,  & (a,b) = (0,0) \\
		3,  & (a,b) \in \{ (0,1), (1,0), (1,1), (2,0) \} \\
		2,  & (a,b) \in \{(0,2), (1,2), (2,1), (3,0), (3,1) \} \\
		1.  & (a,b) \in \{ (4,0), (1,3), (0,3) \}
	\end{cases}
\end{equation}
\begin{eqnarray*}
	&	
	\tdplotsetmaincoords{70}{110} 
	\begin{tikzpicture}[tdplot_main_coords,scale=1]
		
		\draw[->] (0,0,0) -- (6,0,0) node[anchor=north east]{$x$};
		\draw[->] (0,0,0) -- (0,5,0) node[anchor=north west]{$y$};
		\draw[->] (0,0,0) -- (0,0,5) node[anchor=south]{$z$};
		
		\foreach \x in {1,5}
		{
			\foreach \y in {1,4}
			{
				\foreach \z in {1}
				{
					\node at (\x,\y,\z) {};
				}
			}
		}
		
		\foreach \x in {0,1,2,3,4}
		{
			\foreach \y in {0}
			{
				\foreach \z in {0}
				{
					\draw[fill=blue!30] (\x,\y,\z) -- (\x+1,\y,\z) -- (\x+1,\y+1,\z) -- (\x,\y+1,\z) -- cycle; 
					\draw[fill=blue!30] (\x,\y,\z) -- (\x,\y+1,\z) -- (\x,\y+1,\z+1) -- (\x,\y,\z+1) -- cycle; 
					\draw[fill=blue!30] (\x,\y,\z) -- (\x+1,\y,\z) -- (\x+1,\y,\z+1) -- (\x,\y,\z+1) -- cycle; 
					\draw[fill=blue!30] (\x+1,\y,\z) -- (\x+1,\y+1,\z) -- (\x+1,\y+1,\z+1) -- (\x+1,\y,\z+1) -- cycle; 
					\draw[fill=blue!30] (\x,\y+1,\z) -- (\x+1,\y+1,\z) -- (\x+1,\y+1,\z+1) -- (\x,\y+1,\z+1) -- cycle; 
					\draw[fill=blue!30] (\x,\y,\z+1) -- (\x+1,\y,\z+1) -- (\x+1,\y+1,\z+1) -- (\x,\y+1,\z+1) -- cycle; 
				}
			}
		}
		
		\foreach \x in {0,1,2,3}
		{
			\foreach \y in {1}
			{
				\foreach \z in {0}
				{
					\draw[fill=blue!30] (\x,\y,\z) -- (\x+1,\y,\z) -- (\x+1,\y+1,\z) -- (\x,\y+1,\z) -- cycle; 
					\draw[fill=blue!30] (\x,\y,\z) -- (\x,\y+1,\z) -- (\x,\y+1,\z+1) -- (\x,\y,\z+1) -- cycle; 
					\draw[fill=blue!30] (\x,\y,\z) -- (\x+1,\y,\z) -- (\x+1,\y,\z+1) -- (\x,\y,\z+1) -- cycle; 
					\draw[fill=blue!30] (\x+1,\y,\z) -- (\x+1,\y+1,\z) -- (\x+1,\y+1,\z+1) -- (\x+1,\y,\z+1) -- cycle; 
					\draw[fill=blue!30] (\x,\y+1,\z) -- (\x+1,\y+1,\z) -- (\x+1,\y+1,\z+1) -- (\x,\y+1,\z+1) -- cycle; 
					\draw[fill=blue!70] (\x,\y,\z+1) -- (\x+1,\y,\z+1) -- (\x+1,\y+1,\z+1) -- (\x,\y+1,\z+1) -- cycle; 
				}
			}
		}
		
		\foreach \x in {0,1}
		{
			\foreach \y in {2,3}
			{
				\foreach \z in {0}
				{
					\draw[fill=blue!30] (\x,\y,\z) -- (\x+1,\y,\z) -- (\x+1,\y+1,\z) -- (\x,\y+1,\z) -- cycle; 
					\draw[fill=blue!30] (\x,\y,\z) -- (\x,\y+1,\z) -- (\x,\y+1,\z+1) -- (\x,\y,\z+1) -- cycle; 
					\draw[fill=blue!30] (\x,\y,\z) -- (\x+1,\y,\z) -- (\x+1,\y,\z+1) -- (\x,\y,\z+1) -- cycle; 
					\draw[fill=blue!30] (\x+1,\y,\z) -- (\x+1,\y+1,\z) -- (\x+1,\y+1,\z+1) -- (\x+1,\y,\z+1) -- cycle; 
					\draw[fill=blue!30] (\x,\y+1,\z) -- (\x+1,\y+1,\z) -- (\x+1,\y+1,\z+1) -- (\x,\y+1,\z+1) -- cycle; 
					\draw[fill=blue!30] (\x,\y,\z+1) -- (\x+1,\y,\z+1) -- (\x+1,\y+1,\z+1) -- (\x,\y+1,\z+1) -- cycle; 
				}
			}
		}		
		\foreach \x in {0,1,...,4}
		{
			\foreach \y in {0,1,...,3}
			{
				\foreach \z in {3}
				{
					\node at (\x,\y,\z) {};
				}
			}
		}
		
		\foreach \x in {0,1,2,3}
		{
			\foreach \y in {0,1}
			{
				\foreach \z in {1}
				{
					\draw[fill=blue!30] (\x,\y,\z) -- (\x+1,\y,\z) -- (\x+1,\y+1,\z) -- (\x,\y+1,\z) -- cycle; 
					\draw[fill=blue!30] (\x,\y,\z) -- (\x,\y+1,\z) -- (\x,\y+1,\z+1) -- (\x,\y,\z+1) -- cycle; 
					\draw[fill=blue!30] (\x,\y,\z) -- (\x+1,\y,\z) -- (\x+1,\y,\z+1) -- (\x,\y,\z+1) -- cycle; 
					\draw[fill=blue!30] (\x+1,\y,\z) -- (\x+1,\y+1,\z) -- (\x+1,\y+1,\z+1) -- (\x+1,\y,\z+1) -- cycle; 
					\draw[fill=blue!30] (\x,\y+1,\z) -- (\x+1,\y+1,\z) -- (\x+1,\y+1,\z+1) -- (\x,\y+1,\z+1) -- cycle; 
					\draw[fill=blue!30] (\x,\y,\z+1) -- (\x+1,\y,\z+1) -- (\x+1,\y+1,\z+1) -- (\x,\y+1,\z+1) -- cycle; 
				}
			}
		}
		\foreach \x in {0,1}
		{
			\foreach \y in {2}
			{
				\foreach \z in {1}
				{
					\draw[fill=blue!30] (\x,\y,\z) -- (\x+1,\y,\z) -- (\x+1,\y+1,\z) -- (\x,\y+1,\z) -- cycle; 
					\draw[fill=blue!30] (\x,\y,\z) -- (\x,\y+1,\z) -- (\x,\y+1,\z+1) -- (\x,\y,\z+1) -- cycle; 
					\draw[fill=blue!30] (\x,\y,\z) -- (\x+1,\y,\z) -- (\x+1,\y,\z+1) -- (\x,\y,\z+1) -- cycle; 
					\draw[fill=blue!30] (\x+1,\y,\z) -- (\x+1,\y+1,\z) -- (\x+1,\y+1,\z+1) -- (\x+1,\y,\z+1) -- cycle; 
					\draw[fill=blue!30] (\x,\y+1,\z) -- (\x+1,\y+1,\z) -- (\x+1,\y+1,\z+1) -- (\x,\y+1,\z+1) -- cycle; 
					\draw[fill=blue!30] (\x,\y,\z+1) -- (\x+1,\y,\z+1) -- (\x+1,\y+1,\z+1) -- (\x,\y+1,\z+1) -- cycle; 
				}
			}
		}		
		\foreach \x in {1,3}
		{
			\foreach \y in {1,2}
			{
				\foreach \z in {3}
				{
					\node at (\x,\y,\z) {};
				}
			}
		}
		
		\foreach \x in {0,1,2}
		{
			\foreach \y in {0}
			{
				\foreach \z in {2}
				{
					\draw[fill=blue!30] (\x,\y,\z) -- (\x+1,\y,\z) -- (\x+1,\y+1,\z) -- (\x,\y+1,\z) -- cycle; 
					\draw[fill=blue!30] (\x,\y,\z) -- (\x,\y+1,\z) -- (\x,\y+1,\z+1) -- (\x,\y,\z+1) -- cycle; 
					\draw[fill=blue!30] (\x,\y,\z) -- (\x+1,\y,\z) -- (\x+1,\y,\z+1) -- (\x,\y,\z+1) -- cycle; 
					\draw[fill=blue!30] (\x+1,\y,\z) -- (\x+1,\y+1,\z) -- (\x+1,\y+1,\z+1) -- (\x+1,\y,\z+1) -- cycle; 
					\draw[fill=blue!30] (\x,\y+1,\z) -- (\x+1,\y+1,\z) -- (\x+1,\y+1,\z+1) -- (\x,\y+1,\z+1) -- cycle; 
					\draw[fill=blue!30] (\x,\y,\z+1) -- (\x+1,\y,\z+1) -- (\x+1,\y+1,\z+1) -- (\x,\y+1,\z+1) -- cycle; 
				}
			}
		}
		
		\foreach \x in {0,1}
		{
			\foreach \y in {1}
			{
				\foreach \z in {2}
				{
					\draw[fill=blue!30] (\x,\y,\z) -- (\x+1,\y,\z) -- (\x+1,\y+1,\z) -- (\x,\y+1,\z) -- cycle; 
					\draw[fill=blue!30] (\x,\y,\z) -- (\x,\y+1,\z) -- (\x,\y+1,\z+1) -- (\x,\y,\z+1) -- cycle; 
					\draw[fill=blue!30] (\x,\y,\z) -- (\x+1,\y,\z) -- (\x+1,\y,\z+1) -- (\x,\y,\z+1) -- cycle; 
					\draw[fill=blue!30] (\x+1,\y,\z) -- (\x+1,\y+1,\z) -- (\x+1,\y+1,\z+1) -- (\x+1,\y,\z+1) -- cycle; 
					\draw[fill=blue!30] (\x,\y+1,\z) -- (\x+1,\y+1,\z) -- (\x+1,\y+1,\z+1) -- (\x,\y+1,\z+1) -- cycle; 
					\draw[fill=blue!30] (\x,\y,\z+1) -- (\x+1,\y,\z+1) -- (\x+1,\y+1,\z+1) -- (\x,\y+1,\z+1) -- cycle; 
					 face
				}
			}
		}		
		
		\foreach \x in {0}
		{
			\foreach \y in {0}
			{
				\foreach \z in {3}
				{
					\node at (\x,\y,\z) {};
				}
			}
		}
		
		\foreach \x in {0}
		{
			\foreach \y in {0}
			{
				\foreach \z in {3}
				{
					\draw[fill=blue!30] (\x,\y,\z) -- (\x+1,\y,\z) -- (\x+1,\y+1,\z) -- (\x,\y+1,\z) -- cycle; 
					\draw[fill=blue!30] (\x,\y,\z) -- (\x,\y+1,\z) -- (\x,\y+1,\z+1) -- (\x,\y,\z+1) -- cycle; 
					\draw[fill=blue!30] (\x,\y,\z) -- (\x+1,\y,\z) -- (\x+1,\y,\z+1) -- (\x,\y,\z+1) -- cycle; 
					\draw[fill=blue!30] (\x+1,\y,\z) -- (\x+1,\y+1,\z) -- (\x+1,\y+1,\z+1) -- (\x+1,\y,\z+1) -- cycle; 
					\draw[fill=blue!30] (\x,\y+1,\z) -- (\x+1,\y+1,\z) -- (\x+1,\y+1,\z+1) -- (\x,\y+1,\z+1) -- cycle; 
					\draw[fill=blue!30] (\x,\y,\z+1) -- (\x+1,\y,\z+1) -- (\x+1,\y+1,\z+1) -- (\x,\y+1,\z+1) -- cycle; 
				}
			}
		}
		
		\foreach \x in {0,1}
		{
			\foreach \y in {1}
			{
				\foreach \z in {2}
				{
					\draw[fill=blue!30] (\x,\y,\z) -- (\x+1,\y,\z) -- (\x+1,\y+1,\z) -- (\x,\y+1,\z) -- cycle; 
					\draw[fill=blue!30] (\x,\y,\z) -- (\x,\y+1,\z) -- (\x,\y+1,\z+1) -- (\x,\y,\z+1) -- cycle; 
					\draw[fill=blue!30] (\x,\y,\z) -- (\x+1,\y,\z) -- (\x+1,\y,\z+1) -- (\x,\y,\z+1) -- cycle; 
					\draw[fill=blue!30] (\x+1,\y,\z) -- (\x+1,\y+1,\z) -- (\x+1,\y+1,\z+1) -- (\x+1,\y,\z+1) -- cycle; 
					\draw[fill=blue!30] (\x,\y+1,\z) -- (\x+1,\y+1,\z) -- (\x+1,\y+1,\z+1) -- (\x,\y+1,\z+1) -- cycle; 
					\draw[fill=blue!30] (\x,\y,\z+1) -- (\x+1,\y,\z+1) -- (\x+1,\y+1,\z+1) -- (\x,\y+1,\z+1) -- cycle; 
				}
			}
		}	
	\end{tikzpicture}
	&  
\end{eqnarray*}
\centerline{\footnotesize \text{\bf Figure 4. } \text{$D_\la$ for a 
		3-dimensional partition of $ n = 29 $.}}
\end{example}

\begin{definition}  \label{HDbarcode}
	\begin{enumerate}
		\item[{\rm (i)}]
		A {\it multiset} is defined to be an ordered pair $(A, m)$ where 
		$A$ is a set and $m: A \to \Z^+$ is a function from $A$ to 
		the set of positive integers. Informally, a multiset is a set 
		where elements are allowed to have multiple copies. 
		
		\item[{\rm (ii)}] 
		For $d \ge 1$, a {\it $d$-dimensional barcode} is a multiset consisting of  
		$$
		\T_{a^{(i)}} (D^{\rm ei}_{\la^{(i)}}), \quad i \in \Lambda
		$$
		where $\Lambda$ is an index set, and for each $i \in \Lambda$, 
		$a^{(i)} \in \N^{d}$ and $\la^{(i)}$ is a $d$-dimensional partition.
	\end{enumerate}
\end{definition}

\section{\bf $\N^d$-indexed persistence modules}   
\label{section_MPP}

\begin{definition}   \label{PosetCat-def}
	\begin{enumerate}
		\item[{\rm (i)}] A {\it partially ordered set} (or {\it poset} for short) 
		is a set $P$ together with a partial ordering $\le$ satisfying
		\smallskip
		\begin{enumerate}
			\item[$\bullet$] $a \le a$ for all $a \in P$ (reflexivity);
			
			\item[$\bullet$] $a \le b$ and $b \le a$ imply $a=b$ (anti-symmetry);
			
			\item[$\bullet$]  $a \le b$ and $b \le c$ imply $a \le c$ 
			(transitivity).
		\end{enumerate}
		
		\item[{\rm (ii)}] The {\it poset category} associated to a poset $P$ 
		is the category whose objects are the elements of $P$ and for $a,b \in P$, 
		${\rm Hom}(a, b)$ consists of one element if $a \le b$ and 
		is the empty set if $a \not \le b$. By abusing notations, we also 
		use $P$ to denotes the poset category associated to a poset $P$. 
	\end{enumerate}
\end{definition}

\begin{definition}   \label{MPP-def}
	\begin{enumerate}
		\item[{\rm (i)}] 
		Fix a field $k$ and a poset $(P, \le)$.
		A {\it $P$-indexed persistence module over $k$} is a functor 
		$$
		M: P \to {\bf Vec}_k
		$$ 
		where ${\bf Vec}_k$ is the category of finite dimensional vector spaces 
		over $k$ (and $P$ denotes the poset category associated to $P$). 
		
		\item[{\rm (ii)}] 
		The image of $p \in P$ is denoted 
		by $M_p$. For $p, q \in P$ with $p \le q$, the unique morphism from 
		$M_p$ to $M_q$ corresponding to $p \le q$ is denoted by $M_{p,q}$.
		
		\item[{\rm (iii)}] 
		Define $\gr (m) = p$ if $m \in M_p$.
	\end{enumerate}
\end{definition}

In the rest of the paper, we will assume $P = \N^d$ and the field $k$ 
will be implicite. A persistence module is meant to be 
a $\N^d$-indexed persistence module over $k$.

\begin{theorem}  \label{CZ2009}
	(\cite[Theorem~1]{CZ}) (Correspondence) 
	The category of $\N^d$-indexed persistence modules over $k$ is equivalent  
	to the category of $\N^d$-graded $k[t_1, \ldots, t_d]$-modules.
\end{theorem}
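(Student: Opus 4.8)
The plan is to prove the equivalence by exhibiting an explicit pair of functors in the two directions and checking that they are mutually quasi-inverse (in fact, mutually inverse). Throughout, write $\e_1, \ldots, \e_d$ for the standard basis of $\N^d$ and, for $v = (v_1, \ldots, v_d) \in \N^d$, write $t^v = t_1^{v_1} \cdots t_d^{v_d}$.

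First I would construct a functor $\alpha$ from persistence modules to $\N^d$-graded $k[t_1, \ldots, t_d]$-modules. Given a persistence module $M$, set $\alpha(M) = \bigoplus_{x \in \N^d} M_x$ as an $\N^d$-graded $k$-vector space, and make it a $k[t_1, \ldots, t_d]$-module by declaring that $t_j$ act on the homogeneous piece $M_x$ via the structure map $M_{x, x+\e_j} \colon M_x \to M_{x+\e_j}$. This raises degree by $\e_j$, so the resulting module is $\N^d$-graded. The one point to verify is that the operators for distinct indices commute, i.e. that $t_i t_j$ and $t_j t_i$ act identically; this is immediate from functoriality of $M$ applied to the square with corners $x$, $x+\e_i$, $x+\e_j$, $x+\e_i+\e_j$, both composites being equal to $M_{x,\, x+\e_i+\e_j}$. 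No further relations need checking, since $k[t_1, \ldots, t_d]$ is the free commutative $k$-algebra on $t_1, \ldots, t_d$. On morphisms, a natural transformation $\phi \colon M \to N$ yields $\alpha(\phi) = \bigoplus_x \phi_x$, which is $k[t_1,\ldots,t_d]$-linear precisely because naturality says $\phi_{x+\e_j} \circ M_{x, x+\e_j} = N_{x, x+\e_j} \circ \phi_x$ for each $j$.

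In the other direction I would construct $\beta$. Given an $\N^d$-graded module $V = \bigoplus_x V_x$, define $\beta(V) \colon \N^d \to {\bf Vec}_k$ by $\beta(V)_x = V_x$ and, for $x \le y$, by letting $\beta(V)_{x,y} \colon V_x \to V_y$ be multiplication by $t^{y-x}$; this lands in $V_y$ because the module is graded. Functoriality holds since $\beta(V)_{y,z} \circ \beta(V)_{x,y}$ is multiplication by $t^{z-y} t^{y-x} = t^{z-x} = \beta(V)_{x,z}$ and $\beta(V)_{x,x}$ is multiplication by $t^0 = 1 = \id$, the commutativity of $k[t_1, \ldots, t_d]$ making the monomial $t^{y-x}$ unambiguous. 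A degree-$0$ module homomorphism restricts to a family of linear maps on graded pieces that is automatically natural, defining $\beta$ on morphisms. Finally I would check that the composites $\beta \circ \alpha$ and $\alpha \circ \beta$ are the identity functors: $\beta(\alpha(M))_x = M_x$, and its transition map $x \le y$ is the composite of the maps $M_{z, z+\e_j}$ along any monotone lattice path from $x$ to $y$, which equals $M_{x,y}$ by functoriality; while $\alpha(\beta(V)) = V$ with each $t_j$ acting as it did originally. These identifications are natural in $M$ and in $V$, so $\alpha$ and $\beta$ give an equivalence (indeed an isomorphism) of categories.

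I do not expect a genuine obstacle here: the proof is the routine but careful verification that the polynomial-ring relations correspond exactly to the commuting-square relations of the poset $\N^d$, together with the functoriality and naturality checks. The only bookkeeping subtlety worth flagging is the finiteness hypothesis: since ${\bf Vec}_k$ here consists of \emph{finite}-dimensional vector spaces, the statement should be read as an equivalence with the category of $\N^d$-graded $k[t_1, \ldots, t_d]$-modules all of whose graded pieces are finite-dimensional over $k$, a condition the functors $\alpha$ and $\beta$ visibly preserve. This reproduces the argument of \cite[Theorem~1]{CZ}.
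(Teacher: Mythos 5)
Your proof is correct and follows the standard argument; the paper itself offers no proof of this statement but simply cites \cite[Theorem~1]{CZ}, and the functor $\alpha$ you construct is exactly the assignment the paper records in \eqref{CZ2009.1}. Your closing caveat --- that since ${\bf Vec}_k$ consists of finite-dimensional spaces the target category should be read as $\N^d$-graded modules with finite-dimensional graded pieces --- is an appropriate and worthwhile precision.
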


\begin{theorem}  \label{CZ2009Theorem2}
	(\cite[Theorem~2]{CZ}) (Realization) 
	Let $k = \mathbb F_p$ for some prime $p$, let $q$ be a positive integer,
	and let $M$ be an $\N^d$-graded $k[x_1, \ldots, x_d]$-module. 
	Then there is an $\N^d$-filtered finite simplicial complex $X$ so that 
	$H_q(X, k) \cong M$ as $\N^d$-persistence modules.
\end{theorem}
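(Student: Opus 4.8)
The plan is to realize $M$ as the $q$-th homology of a filtered complex built from a finite presentation of $M$, transcribing the presentation map into simplicial boundaries in the spirit of a Moore space. By Theorem~\ref{CZ2009} I may work entirely with the $\N^d$-graded $k[x_1,\ldots,x_d]$-module $M$, so the task is to construct a finite $\N^d$-filtered simplicial complex $X=\{X_u\}_{u\in\N^d}$ whose simplicial homology with $\mathbb F_p$-coefficients satisfies $H_q(X_u,\mathbb F_p)\cong M_u$ compatibly with the filtration. Because $k[x_1,\ldots,x_d]$ is Noetherian and $M$ is finitely generated, $M$ has a finite graded presentation
$$
F_1 \xrightarrow{\ \phi\ } F_0 \to M \to 0, \qquad F_i=\bigoplus_\ell k[x_1,\ldots,x_d](-v_{i\ell}),
$$
so that $M=\operatorname{coker}\phi$, with each $v_{i\ell}\in\N^d$ a generator multidegree and $\phi$ a graded map.

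The geometric model follows the usual recipe for realizing a presentation in a fixed degree $q\ge 1$. For each generator of $F_0$ in multidegree $v_{0\ell}$ I would include a simplicial $q$-sphere $\Sigma_\ell$ (for instance $\partial\Delta^{q+1}$) declared to be born at $v_{0\ell}$; its fundamental class is a $q$-cycle. For each generator of $F_1$ in multidegree $v_{1\ell}$ I would attach a $(q+1)$-dimensional simplicial disk born at $v_{1\ell}$ whose boundary realizes the $q$-cycle $\phi$ assigns to that generator, thereby imposing the relation. Declaring a simplex born at $v$ to lie in $X_u$ exactly when $u\ge v$ makes each $X_u$ a subcomplex, makes the inclusions $X_u\hookrightarrow X_{u'}$ for $u\le u'$ genuine, and makes the induced maps on chains reproduce the structure maps of $F_0$ and $F_1$ verbatim; a degree count then gives $H_q(X_u,\mathbb F_p)=\operatorname{coker}(\phi_u)=M_u$, naturally in $u$.

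The crux is realizing the boundaries of the attaching disks. A simplicial boundary operator has entries in $\{0,\pm1\}$, i.e.\ in the image of $\Z\to\mathbb F_p$, whereas $\phi$ prescribes, generator by generator, an arbitrary $\mathbb F_p$-linear combination $\sum_\ell c_\ell[\Sigma_\ell]$ of the spheres. To produce a disk with this boundary I would interpolate auxiliary combinatorial gadgets---cones, subdivided prisms, and simplicial models of degree-$c$ self-maps of $S^q$---so that the induced map on $q$-cycles equals the specified coefficient modulo $p$. Restricting to $\mathbb F_p$ is exactly what makes this feasible: each coefficient must be matched only modulo $p$, so only finitely many sphere self-maps of prescribed degree are needed, and each can be built explicitly as a pseudomanifold. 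The auxiliary simplices inherit birth multidegrees from the componentwise maxima of the generators they connect, so every $X_u$ remains a subcomplex and the construction stays compatible with the filtration in all $d$ directions at once.

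I expect this last point to be the main obstacle. In the one-parameter case births and deaths can be processed along a line, but for $d>1$ a single $q$-cycle may be forced to die along several incomparable directions simultaneously, and the gadgets realizing different coefficients of $\phi$ can collide in their birth multidegrees. Overcoming this requires a careful placement of the attaching simplices---at joins or componentwise maxima of the relevant multidegrees---so that the restriction of the global boundary map to each multidegree reproduces $\phi$ exactly, introducing no spurious $q$-cycles or $q$-boundaries. Granting such a realization, a degree-by-degree identification of the filtered simplicial chain complex near degree $q$ with the presentation yields $H_q(X,\mathbb F_p)\cong\operatorname{coker}\phi\cong M$ as $\N^d$-indexed persistence modules, which is the assertion.
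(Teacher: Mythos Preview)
The paper does not prove this theorem: it is quoted from \cite[Theorem~2]{CZ} and stated without any argument, purely as background. There is no proof in the paper to compare your proposal against.

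For what it is worth, your sketch is in the spirit of the original Carlsson--Zomorodian construction: realize a finite graded presentation $F_1\xrightarrow{\phi}F_0\to M\to 0$ by placing simplicial $q$-spheres at the generator multidegrees and attaching $(q{+}1)$-disks at the relation multidegrees whose boundaries match $\phi$ modulo $p$. One caveat: you invoke ``$M$ is finitely generated'' as if it were given, but the statement as reproduced in this paper does not say so; in \cite{CZ} the standing convention is that persistence modules are finite, so the hypothesis is available, but you should state it rather than slip it in. Your own reservations about arranging the attaching cells compatibly across all $d$ directions are well placed---that is where the actual work lies---but since the present paper offers no proof, there is nothing further here to compare.
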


Denote the standard basis of $\R^d$ by
\begin{eqnarray}    \label{standardbasis}
	\{\e_1, \ldots, \e_d\}.
\end{eqnarray}
Given an $\N^d$-indexed persistence module $M: \N^d \to {\bf Vec}_k$, 
the associated $\N^d$-graded $k[t_1, \ldots, t_d]$-module is 
\begin{eqnarray}    \label{CZ2009.1}
	\bigoplus_{z \in \N^d} M_z
\end{eqnarray}
on which $k[t_1, \ldots, t_d]$ acts via $t_i \cdot m = M_{z, z+\e_i}(m)$ 
for every $m \in M_z$ and $1 \le i \le d$.

Morphisms between persistence modules, their kernels and images, 
submodules and quotient modules are defined component-wise 
in the usual way.  

\begin{definition}   \label{generators}
	Let $M$ be an $\N^d$-indexed persistence module. 
	\begin{enumerate}
		\item[{\rm (i)}] 
		Let $S \subset \cup_{z \in \N^d} M_z$ be a subset. 
		The submodule $\langle S \rangle$ of $M$ generated by $S$ is 
		the submodule such that for each $z \in \N^d$, 
		$\langle S \rangle_z$ consists of all the elements
		$$
		\sum_{i=1}^n c_i \cdot M_{\gr(s_i), z}(s_i)
		$$
		where $c_1, \ldots, c_n \in k$ and $s_1, \ldots, s_n \in S$ 
		with $\gr(s_i) \le z$ for each $i$. If $\gr(s) \not \le z$ 
		for every $s \in S$, then we put $\langle S \rangle_z = 0$.
		
		\item[{\rm (ii)}] 
		A subset $S \subset \cup_{z \in \N^d} M_z$ is a {\it set of generators} 
		for $M$ if $M = \langle S \rangle$.
		
		\item[{\rm (iii)}] 
		The persistence module $M$ is {\it finitely generated} if there
		exists a finite set of generators for $M$.
		
		\item[{\rm (iv)}]
		Fix $a \in \N^d$. The translation $\T_aM$ of $M$ by $a$ is 
		the persistence module given by
		$$
		(\T_aM)_x = \begin{cases} 
			M_{x-a}&\text{if $x \ge a$},  \\
			0&\text{otherwise},
		\end{cases}
		\qquad
		(\T_aM)_{x,y} = \begin{cases} 
			M_{x-a, y-a}&\text{if $a \le x \le y$},  \\
			0&\text{otherwise},
		\end{cases}
		$$
		for $x, y \in \N^d$ with $x \le y$. 
	\end{enumerate}
\end{definition}

For a $d$-dimensional partition $\la$,
recall from Definition~\ref{Youngdiagram}~(ii) the set $D_\la^{\rm int}$ of integral points inside the Young diagram $D_\la$.

\begin{definition}   \label{kla}
	\begin{enumerate}
		\item[{\rm (i)}] 
		For a $d$-dimensional partition $\la$, 
		define the $\N^d$-indexed persistence module $\kbf_\la$ by
		$$
		(\kbf_\la)_x 
		= \begin{cases}
			k &\text{if $x \in D_\la^{\rm int}$}, \\
			0 &\text{otherwise}, 
		\end{cases}  
		\qquad
		(\kbf_\la)_{x, y} 
		= \begin{cases}
			\id_k &\text{if $x,y \in D_\la^{\rm int}$ and $x \le y$}, \\
			0 &\text{otherwise},
		\end{cases}       
		$$
		where $x, y \in \N^d$ with $x \le y$. When $\la = (+\infty)_{i_1, \ldots, 
			i_{d-1}}$, we put $\kbf_\la = \kbf$. When $|\la| \ne 0$, 
		define ${\bf 1}_\la \in (\kbf_\la)_O = k$ to 
		be the multiplicative identity. 
		
		\item[{\rm (ii)}] 
		An $\N^d$-indexed persistence module $F$ is free if there exists a multiset $A$ of elements in $\N^d$ such that 
		$$
		F \cong \bigoplus_{a \in A} \T_a\kbf.
		$$
		
		\item[{\rm (iii)}]
		A {\it presentation} of a persistence module $M$ is a morphism 
		$f: F \to F'$ of free modules such that $M \cong F'/\im(f)$.
	\end{enumerate}
\end{definition}

The following combines \cite[Proposition~6.40]{Les} and \cite[Proposition~6.43]{Les}.

\begin{proposition}   \label{Les6.40}
	Every persistence module $M$ has a presentation. Moreover, 
	if $M$ is finitely generated, then there exists a presentation 
	$f: F \to F'$ of $M$ such that both the free modules $F$ and $F'$ 
	are finitely generated. 
\end{proposition}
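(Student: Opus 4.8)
The plan is to build the presentation by hand, using only that every persistence module has a generating set, and then to upgrade to the finitely generated case with the help of the correspondence in Theorem~\ref{CZ2009}.

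For the first assertion, choose a homogeneous generating set $S$ for $M$ — for instance $S = \bigcup_{z \in \N^d} B_z$, where $B_z$ is a $k$-basis of $M_z$, so that $\langle S\rangle = M$ in the sense of Definition~\ref{generators}. Since $\T_z\kbf$ is free on a single generator in degree $z$ (a morphism $\T_z\kbf \to M$ is determined by, and may be prescribed arbitrarily by, the image in $M_z$ of that generator, by naturality and the description of $\T_z\kbf$), setting $F' = \bigoplus_{s \in S}\T_{\gr(s)}\kbf$ and sending the generator of the $s$-th summand to $s$ defines a morphism $\varphi\colon F' \to M$ whose image is the submodule $\langle S\rangle = M$; thus $\varphi$ is surjective. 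Let $K = \ker\varphi \subseteq F'$, which is again a persistence module and hence has a generating set, e.g. $\bigcup_z K_z$. Applying the same construction to $K$ yields a free module $F$ and a surjection $\psi\colon F \twoheadrightarrow K$. Composing with the inclusion $\iota\colon K \hookrightarrow F'$ produces $f = \iota\circ\psi\colon F \to F'$ with $\im(f) = K$, so that $F'/\im(f) = F'/K \cong M$; this $f$ is the desired presentation.

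For the ``moreover'' part, suppose $M$ is finitely generated. Then $S$ may be taken finite, so $F'$ is a finite direct sum of translates of $\kbf$ and is finitely generated. It remains to choose $F$ finitely generated, for which it suffices that $K = \ker\varphi$ be finitely generated. Here I would invoke Theorem~\ref{CZ2009}: under the equivalence with $\N^d$-graded $k[t_1, \ldots, t_d]$-modules, $F'$ corresponds to a finitely generated graded module, and since $k[t_1, \ldots, t_d]$ is Noetherian by the Hilbert basis theorem, $F'$ is a Noetherian module; hence its submodule $K$ is finitely generated. Taking $F$ to be a finite direct sum of translates of $\kbf$ mapping onto $K$ then gives a presentation $f\colon F \to F'$ with both $F$ and $F'$ finitely generated.

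I do not expect a genuine obstacle: the only delicate points are verifying that $\T_z\kbf$ really is free on one generator in degree $z$ (so that $\varphi$ and $\psi$ are well defined) and appealing to Noetherianity of $k[t_1, \ldots, t_d]$ for the second part. Alternatively, one may simply cite \cite[Propositions~6.40 and~6.43]{Les}, which prove precisely these two statements.
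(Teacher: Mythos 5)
Your proof is correct. Note that the paper itself offers no argument for this proposition: it is stated as a combination of Propositions~6.40 and~6.43 of \cite{Les} and simply cited, so there is no ``paper proof'' to match against. Your argument --- take a homogeneous generating set $S$, map $F' = \bigoplus_{s \in S}\T_{\gr(s)}\kbf$ onto $M$ using the universal property of $\T_z\kbf$, cover the kernel $K$ by a second free module, and in the finitely generated case deduce that $K$ is finitely generated from the Noetherianity of $k[t_1,\ldots,t_d]$ via the correspondence of Theorem~\ref{CZ2009} --- is the standard one and is essentially what the cited propositions establish. One small point worth making explicit: since the paper's persistence modules take values in \emph{finite-dimensional} vector spaces, you should check that your (possibly infinite) free modules are still legitimate persistence modules; this holds because with $S = \bigcup_z B_z$ each $B_z$ is finite and, for fixed $z \in \N^d$, only finitely many $w \in \N^d$ satisfy $w \le z$, so each graded piece $F'_z$ (and likewise $F_z$) is finite-dimensional. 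With that observation your proof is complete and self-contained.
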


Next, we study the case $F' = \T_a\kbf$ where $a \in \N^d$.

\begin{lemma}  \label{quotientsofk}
	Let $a \in \N^d$. Then every quotient of the persistence module 
	$\T_a\kbf$ is equal to $\T_a\kbf_{\la}$ for 
	some $d$-dimensional partition $\la$.
\end{lemma}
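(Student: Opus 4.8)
The plan is to analyze a quotient $Q = \T_a\kbf / K$ by understanding it as a $\N^d$-indexed persistence module and reading off the structure of the submodule $K$. First I would reduce to the case $a = O$: since translation is an exact functor commuting with quotients (Definition~\ref{generators}~(iv)), and since $\T_a\kbf = \T_a(\kbf)$ while $\T_a\kbf_\la = \T_a(\kbf_\la)$, it suffices to show every quotient of $\kbf$ is of the form $\kbf_\la$ for some $d$-dimensional partition $\la$. So assume $a = O$ from now on.

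Next I would examine the submodules of $\kbf$. Recall $(\kbf)_x = k$ for every $x \in \N^d$ (this is the case $\la = (+\infty)_{i_1,\ldots,i_{d-1}}$, so $D_\la^{\rm int} = \N^d$), with all structure maps equal to $\id_k$. For a submodule $K \subseteq \kbf$, each $K_x$ is a $k$-subspace of $k$, hence either $0$ or all of $k$; let $U = \{x \in \N^d : K_x = k\}$. Because $K$ is a submodule and the maps $(\kbf)_{x,y} = \id_k$ for $x \le y$, we get that $x \in U$ and $x \le y$ force $y \in U$; that is, $U$ is an \emph{up-set} (order filter) in $\N^d$. Conversely every up-set gives a submodule. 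Now set $V = \N^d \setminus U$, a down-set. The quotient $Q = \kbf / K$ then has $Q_x = k$ for $x \in V$ and $Q_x = 0$ for $x \in U$, with $Q_{x,y} = \id_k$ when $x, y \in V$ (and $x \le y$) and $Q_{x,y} = 0$ otherwise. So $Q$ is determined entirely by the down-set $V$, and the claim reduces to: every down-set $V \subseteq \N^d$ equals $D_\la^{\rm int}$ for some $d$-dimensional partition $\la$.

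Finally I would make the translation between down-sets and partitions explicit. Given a down-set $V \subseteq \N^d$, for each $(i_1,\ldots,i_{d-1}) \in \N^{d-1}$ define
$$
\lambda_{i_1,\ldots,i_{d-1}} = \#\{\, h \in \N : (i_1,\ldots,i_{d-1},h) \in V \,\} \in \N \sqcup \{+\infty\}.
$$
Because $V$ is a down-set, the column $\{h : (i_1,\ldots,i_{d-1},h) \in V\}$ is itself a down-set in $\N$, i.e.\ an initial segment $\{0,1,\ldots,\lambda_{i_1,\ldots,i_{d-1}}-1\}$ (or all of $\N$), so $V = \{(i_1,\ldots,i_{d-1},h) : h < \lambda_{i_1,\ldots,i_{d-1}}\}$, matching \eqref{Youngdiagram.1}. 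The monotonicity required of a $d$-dimensional partition, namely $\lambda_{i_1,\ldots,i_{d-1}} \ge \lambda_{j_1,\ldots,j_{d-1}}$ whenever $i_1 \le j_1, \ldots, i_{d-1} \le j_{d-1}$, follows from the down-set property applied in the first $d-1$ coordinate directions: if $(j_1,\ldots,j_{d-1},h) \in V$ and each $i_r \le j_r$, then $(i_1,\ldots,i_{d-1},h) \in V$, so the $(i_1,\ldots,i_{d-1})$-column contains the $(j_1,\ldots,j_{d-1})$-column. Hence $\la = (\lambda_{i_1,\ldots,i_{d-1}})$ is a $d$-dimensional partition with $D_\la^{\rm int} = V$, and comparing structure maps gives $Q \cong \kbf_\la$; applying $\T_a$ finishes the proof.

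The only mild subtlety — and the step I would be most careful about — is the bookkeeping in the case $d = 1$, where $\N^{d-1} = \N^0 = \{O\}$ and a down-set of $\N$ is just an initial segment $[0,n)$ (possibly empty, possibly all of $\N$), corresponding to $\la = (n)_O$; one should check the degenerate cases $V = \emptyset$ (giving $|\la| = 0$) and $V = \N^d$ (giving $\la = (+\infty)_{\cdots}$, i.e.\ $Q = \kbf$ itself) are covered by the statement as phrased, since $\la$ is allowed to be any $d$-dimensional partition with no restriction on its size here. Everything else is a routine unwinding of the definitions of submodule, quotient, and the structure maps of $\kbf$.
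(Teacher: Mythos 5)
Your proof is correct. It is essentially the same combinatorial argument as the paper's, but in different clothing: the paper routes through the correspondence of Theorem~\ref{CZ2009}, identifying $\kbf$ with the $\N^d$-graded module $k[t_1,\ldots,t_d]$, submodules with $\N^d$-graded (hence monomial) ideals $I$, and then defining $\la_{i_1,\ldots,i_{d-1}} = \min\{b \mid t_1^{i_1}\cdots t_{d-1}^{i_{d-1}}t_d^b \in I\}$; you instead observe directly that a submodule of $\kbf$ is determined by the up-set $U=\{x : K_x = k\}$ and that the complementary down-set $V$ is exactly $D_\la^{\rm int}$ for the partition given by column heights. These are the same bijection (monomial ideals $\leftrightarrow$ up-sets of exponents), so the content is identical, but your version is more self-contained --- it does not need the graded-module dictionary or the fact that graded ideals of $k[t_1,\ldots,t_d]$ are monomial --- at the cost of the paper's version making the link to Hilbert-scheme/torus-fixed-point language that motivates Section~\ref{section_Higher}. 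Two small points you handle well and should keep explicit in a write-up: the reduction to $a=O$ (a submodule of $\T_a\kbf$ is automatically supported on $\{x \ge a\}$, so it really is $\T_a$ of a submodule of $\kbf$), and the degenerate cases $V=\emptyset$ and $V=\N^d$, both of which are legitimate $d$-dimensional partitions under Definition~\ref{d-part} since the lemma places no restriction on $|\la|$.
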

\begin{proof}
	It suffices to prove that every quotient of the persistence module 
	$\kbf$ is equal to $\kbf_{\la}$ for some $d$-dimensional partition $\la$. 
	Let $\mathcal Q$ be a quotient of $\kbf$. 
	Let $\mathcal I$ be the submodule of $\kbf$ such that 
	$\mathcal Q = \kbf/\mathcal I$.
	By \eqref{CZ2009.1}, the persistence module $\kbf$ corresponds to 
	the $\N^d$-graded $k[t_1, \ldots, t_d]$-module 
	$k[t_1, \ldots, t_d]$ itself. Thus, the submodule $\mathcal I$ of 
	$\kbf$ corresponds to an $\N^d$-graded ideal $I$ of $k[t_1, \ldots, t_d]$. 
	Being $\N^d$-graded, the ideal $I$ of $k[t_1, \ldots, t_d]$ is generated 
	by monomials. Define a $d$-dimensional partition 
	$\la = (\la_{i_1, \ldots, i_{d-1}})_{i_1, \ldots, i_{d-1}}$ as follows: 
	$$
	\la_{i_1, \ldots, i_{d-1}}
	= \begin{cases}
		\min \{b|t_1^{i_1} \cdots t_{d-1}^{i_{d-1}}t_d^b \in I\}  
		&\text{if $t_1^{i_1} \cdots t_{d-1}^{i_{d-1}}t_d^b \in I$ 
			for some $b \in \N$}, \\
		+\infty &\text{otherwise}.
	\end{cases}
	$$
	Indeed, note that 
	if $t_1^{i_1} \cdots t_d^{i_d} \in I$, then $t_1^{j_1} \cdots t_d^{j_d} 
	\in I$ whenever $i_1 \le j_1, \ldots, i_d \le j_d$. 
	It follows that 
	$
	\lambda_{i_1, \ldots, i_{d-1}} \ge \lambda_{j_1, \ldots, j_{d-1}}
	$
	whenever $i_1 \le j_1, \ldots, i_{d-1} \le j_{d-1}$.
	
	We have
	\begin{eqnarray}    \label{quotientsofk.1}  
		I 
		= \bigoplus_{\substack{(i_1, \ldots, i_{d-1}) \in \N^{d-1}\\
				\la_{i_1, \ldots, i_{d-1}} \ne +\infty}} \,\,
		\bigoplus_{b \ge \la_{i_1, \ldots, i_{d-1}}} k \cdot 
		t_1^{i_1} \cdots t_{d-1}^{i_{d-1}} t_d^b.
	\end{eqnarray}
	As a vector space, the quotient $k[t_1, \ldots, t_d]/I$ is equal to 
	\begin{eqnarray}    \label{quotientsofk.2}  
		\bigoplus_{(i_1, \ldots, i_{d-1}) \in \N^{d-1}} \,\,
		\bigoplus_{b < \la_{i_1, \ldots, i_{d-1}}} k \cdot 
		t_1^{i_1} \cdots t_{d-1}^{i_{d-1}} t_d^b.
	\end{eqnarray}
	Via the correspondence \eqref{CZ2009.1}, the quotient $\mathcal Q$ of 
	$\kbf$, which corresponds to the quotient $k[t_1, \ldots, t_d]/I$ of 
	$k[t_1, \ldots, t_d]$, is equal to $\kbf_{\la}$. 
\end{proof}

\begin{definition}   \label{goodbarcode}
	An $\N^d$-indexed persistence module $M$ admits a {\it barcode} 
	if 
	\begin{eqnarray}    \label{goodbarcode.0} 
		M \cong \bigoplus_{i \in \Lambda} \T_{a^{(i)}}\kbf_{\la^{(i)}}
	\end{eqnarray}
	where for each $i \in \Lambda$, $a^{(i)} \in \N^{d}$ and 
	$\la^{(i)}$ is a $d$-dimensional partition with $|\la^{(i)}| \ne 0$. 
	In this case, the {\it barcode} ${\mathfrak B}_M$ of $M$ is defined 
	to be the multiset whose elements are
	$$
	\T_{a^{(i)}} (D^{\rm ei}_{\la^{(i)}}), \quad i \in \Lambda.
	$$
\end{definition}

\begin{lemma}   \label{Lemma_GoodBarcode}
	Assume that an $\N^d$-indexed persistence module $M$ admits 
	a barcode ${\mathfrak B}_M$. Then,
	\begin{eqnarray}\label{Lemma_GoodBarcode.0} 
		\text{\rm Rank}(M_{x,y}) = |\{S \in {\mathfrak B}_M| x,y \in S \}|
	\end{eqnarray}
	for all $x, y \in \N^d$ satisfying $x \le y$.
\end{lemma}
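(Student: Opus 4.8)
The plan is to reduce the computation of $\text{\rm Rank}(M_{x,y})$ to a sum over the summands $\T_{a^{(i)}}\kbf_{\la^{(i)}}$ appearing in the barcode decomposition \eqref{goodbarcode.0}, and then to evaluate each summand's contribution. First I would observe that morphisms, kernels, images, and hence ranks behave additively with respect to direct sums: if $M \cong \bigoplus_{i \in \Lambda} M^{(i)}$ with $M^{(i)} = \T_{a^{(i)}}\kbf_{\la^{(i)}}$, then for $x \le y$ the map $M_{x,y}$ is the direct sum $\bigoplus_i M^{(i)}_{x,y}$, so $\text{\rm Rank}(M_{x,y}) = \sum_{i \in \Lambda} \text{\rm Rank}\big(M^{(i)}_{x,y}\big)$. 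This step is routine but should be stated, since the whole identity hinges on it.

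Next I would compute $\text{\rm Rank}\big((\T_{a^{(i)}}\kbf_{\la^{(i)}})_{x,y}\big)$ for a single bar. Unwinding Definition~\ref{kla}~(i) together with the translation formula in Definition~\ref{generators}~(iv): the module $\T_{a}\kbf_{\la}$ has $(\T_{a}\kbf_{\la})_z = k$ precisely when $z - a \in D_\la^{\rm int}$, i.e.\ when $z \in \T_a(D_\la^{\rm int}) = \T_a(D_\la^{\rm ei}) \cap \N^d$ (using Remark~\ref{Dlaint-ei} and the fact that $\T_a$ maps $\N^d$ to $\N^d$ bijectively onto $\{w : w \ge a\}$), and is $0$ otherwise; and the structure map between two nonzero components is $\id_k$. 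Hence $(\T_a\kbf_\la)_{x,y}$ has rank $1$ exactly when both $x$ and $y$ lie in $\T_a(D_\la^{\rm ei})$ — equivalently in the bar $S_i := \T_{a^{(i)}}(D^{\rm ei}_{\la^{(i)}})$ — and rank $0$ otherwise. Thus $\text{\rm Rank}\big(M^{(i)}_{x,y}\big) = [\,x,y \in S_i\,]$, the indicator function.

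Combining the two steps gives
$$
\text{\rm Rank}(M_{x,y}) = \sum_{i \in \Lambda} [\,x, y \in S_i\,] = \big|\{ i \in \Lambda \mid x, y \in \T_{a^{(i)}}(D^{\rm ei}_{\la^{(i)}})\}\big| = |\{S \in {\mathfrak B}_M \mid x, y \in S\}|,
$$
where the last equality is just the definition of ${\mathfrak B}_M$ as the multiset $\{S_i\}_{i \in \Lambda}$ (Definition~\ref{goodbarcode}); note that counting indices $i$ and counting elements $S$ of the multiset agree precisely because ${\mathfrak B}_M$ is a multiset, so repeated bars are counted with multiplicity on both sides. This yields \eqref{Lemma_GoodBarcode.0}.

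The only genuine subtlety — and the step I would be most careful about — is verifying that membership $z \in \T_a(D_\la^{\rm int})$ within $\N^d$ is the same as $z \in \T_a(D^{\rm ei}_\la)$ as subsets of $\R^d$ restricted to integral points; this is exactly the content of Remark~\ref{Dlaint-ei} transported through the translation $\T_a$, together with the easy fact that translation by $a \in \N^d$ commutes with intersecting against $\N^d$. Everything else is bookkeeping about direct sums. I would also remark that the isomorphism in \eqref{goodbarcode.0} is only up to isomorphism of persistence modules, but since rank is an isomorphism invariant of the maps $M_{x,y}$, this causes no difficulty.
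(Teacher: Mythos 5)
Your proof is correct and follows essentially the same route as the paper's: decompose $M_{x,y}$ as a direct sum over the bars, observe that each summand $(\T_{a^{(i)}}\kbf_{\la^{(i)}})_{x,y}$ has rank $1$ precisely when $x,y \in \T_{a^{(i)}}(D^{\rm int}_{\la^{(i)}})$ and $0$ otherwise, and then pass from $D^{\rm int}$ to $D^{\rm ei}$ via Remark~\ref{Dlaint-ei}. Your explicit attention to counting with multiplicity in the multiset ${\mathfrak B}_M$ is a welcome clarification of a point the paper treats implicitly.
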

\par\noindent
{\it Proof.}
We may assume $M = \bigoplus_{i \in \Lambda} 
\T_{a^{(i)}}\kbf_{\la^{(i)}}$ so that 
$$
  {\mathfrak B}_M 
= \big \{ \T_{a^{(i)}} (D^{\rm ei}_{\la^{(i)}}) \big \}_{i \in \Lambda}
$$
as multisets. Let $x, y \in \N^d$ satisfying $x \le y$. Then, 
\begin{eqnarray}    \label{Lemma_GoodBarcode.1} 
  \text{\rm Rank}(M_{x,y}) 
= \sum_{i \in \Lambda} \text{\rm Rank}
  \big ((\T_{a^{(i)}}\kbf_{\la^{(i)}})_{x,y} \big ).
\end{eqnarray}
		
Put 
$$
\Lambda_1 
= \{i \in \Lambda|x,y \in \T_{a^{(i)}} (D^{\rm int}_{\la^{(i)}})\}.
$$
By Definition~\ref{kla}~(i) and Definition~\ref{generators}~(iv), 
$$
\text{\rm Rank} \big ((\T_{a^{(i)}}\kbf_{\la^{(i)}})_{x,y} \big )
= \begin{cases}
	1, &\text{\rm if $x, y \in \T_{a^{(i)}} (D^{\rm int}_{\la^{(i)}})$} \\
	0, &\text{\rm otherwise.}
  \end{cases}
$$
Therefore, $\text{\rm Rank}(M_{x,y}) = |\Lambda_1| 
= |\{ \T_{a^{(i)}} (D^{\rm int}_{\la^{(i)}}) | 
x,y \in \T_{a^{(i)}} (D^{\rm int}_{\la^{(i)}}) \}|$.
By Remark~\ref{Dlaint-ei},
$$
D_\la^{\rm int} = D_\la^{\rm ei} \cap \N^d
$$
for every $d$-dimensional partition $\la$. Hence, 
\begin{equation}
  \text{\rm Rank}(M_{x,y}) 
= |\{ \T_{a^{(i)}} (D^{\rm ei}_{\la^{(i)}}) \in {\mathfrak B}_M| 
  x,y \in \T_{a^{(i)}} (D^{\rm ei}_{\la^{(i)}}) \}|.
\tag*{$\qed$}
\end{equation}

The following theorem has been proved in \cite{Nat}.

\begin{theorem}   \label{UniqueBarcode}
(\cite{Nat}) Let $M$ be an $\N^d$-indexed persistence module. 
Assume that there are two isomorphisms 
$$
M \cong \bigoplus_{i \in \Lambda_1} \T_{a^{(i)}}\kbf_{\la^{(i)}}
\qquad
\text{and}
\qquad
M \cong \bigoplus_{\ell \in \Lambda_2} \T_{b^{(\ell)}}\kbf_{\mu^{(\ell)}}
$$
with $|\la^{(i)}| \ne 0$ and $|\mu^{(\ell)}| \ne 0$ for all 
$i \in \Lambda_1$ and $\ell \in \Lambda_2$. Then as multisets, 
\begin{eqnarray}   \label{UniqueBarcode.0}
  \{(\la^{(i)}, a^{(i)})\}_{i \in \Lambda_1}
= \{(\mu^{(\ell)}, b^{(\ell)})\}_{\ell \in \Lambda_2}.
\end{eqnarray}
In particular, the barcode ${\mathfrak B}_M$ of $M$ in 
Definition~\ref{goodbarcode} is well-defined.
\end{theorem}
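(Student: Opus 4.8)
\medskip

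\noindent\textbf{Proof proposal.} The statement is a Krull--Schmidt type uniqueness result, and the plan is to deduce it from three ingredients: (i) each building block $\T_a\kbf_\la$ with $|\la|\neq 0$ is indecomposable with \emph{local} endomorphism ring; (ii) an isomorphism $\T_a\kbf_\la\cong\T_b\kbf_\mu$ forces $(a,\la)=(b,\mu)$; and (iii) the Krull--Remak--Schmidt--Azumaya theorem applied in the ambient abelian category of $\N^d$-indexed persistence modules (equivalently, of $\N^d$-graded $k[t_1,\dots,t_d]$-modules by Theorem~\ref{CZ2009}), in which idempotents split.

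For (i): since $|\la|\neq 0$ the origin $O$ belongs to $D_\la^{\rm int}$, and because $D_\la^{\rm int}$ is a down-set in $\N^d$ with minimum $O$ along which all structure maps of $\kbf_\la$ are $\id_k$, the element ${\bf 1}_\la\in(\kbf_\la)_O$ generates $\kbf_\la$. Hence any endomorphism $\psi$ of $\kbf_\la$ is determined by the scalar $c\in k$ with $\psi_O({\bf 1}_\la)=c\,{\bf 1}_\la$, and naturality forces $\psi_z=c\cdot\id$ at every $z$ with $(\kbf_\la)_z\neq 0$; thus $\End(\kbf_\la)\cong k$. Since the translation functor is fully faithful, $\End(\T_a\kbf_\la)\cong\End(\kbf_\la)\cong k$, which is a field and in particular a local ring, so $\T_a\kbf_\la$ is indecomposable.

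For (ii): the support of $\T_a\kbf_\la$ equals $a+D_\la^{\rm int}$, whose unique $\le$-minimal element is $a$; isomorphic persistence modules have equal supports, so an isomorphism $\T_a\kbf_\la\cong\T_b\kbf_\mu$ forces $a=b$ and then $D_\la^{\rm int}=D_\mu^{\rm int}$, whence $\la=\mu$ upon reading off $\la_{i_1,\dots,i_{d-1}}=\#\{h\in\N\colon (i_1,\dots,i_{d-1},h)\in D_\la^{\rm int}\}$, with the value $+\infty$ when this set is infinite.

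Granting (i) and (ii), I would apply (iii) to the two given decompositions of $M$: since every summand has a local endomorphism ring, the Krull--Remak--Schmidt--Azumaya theorem produces a bijection $\sigma\colon\Lambda_1\to\Lambda_2$ with $\T_{a^{(i)}}\kbf_{\la^{(i)}}\cong\T_{b^{(\sigma(i))}}\kbf_{\mu^{(\sigma(i))}}$ for all $i$, and by (ii) this is exactly the equality of multisets \eqref{UniqueBarcode.0}; well-definedness of $\mathfrak{B}_M$ follows immediately. I expect the only genuine subtlety to be the invocation of (iii) when $\Lambda_1,\Lambda_2$ are infinite, which needs the version of the theorem for arbitrary (not merely finite) direct sums of modules with local endomorphism rings, together with the splitting of idempotents in this category; when the index sets are finite this is just the classical Krull--Schmidt theorem via Fitting's lemma. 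A more hands-on route, closer in spirit to the paper's combinatorics, would instead isolate the module invariants $\dim_k {\rm Hom}(\T_a\kbf_\la,M)$ and $\dim_k {\rm Hom}(M,\T_a\kbf_\la)$ — each computable, by the same connectivity argument as in (i), as a count of the summands to (resp.\ from) which $\T_a\kbf_\la$ admits a nonzero map — and then recover the multiplicities $\#\{i\colon (a^{(i)},\la^{(i)})=(a,\la)\}$ by M\"obius inversion; there the delicate point is arranging the partial order on pairs $(a,\la)$ so that these ${\rm Hom}$-counts are genuinely cumulative over a locally finite poset, which is precisely the bookkeeping that Azumaya's theorem packages for free.
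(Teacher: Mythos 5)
Your proposal is correct: the generator ${\bf 1}_\la$ at the minimum of the down-set $D_\la^{\rm int}$ does force $\End(\T_a\kbf_\la)\cong k$, the support argument does pin down $(a,\la)$ from the isomorphism class of $\T_a\kbf_\la$, and the Krull--Remak--Schmidt--Azumaya theorem then yields \eqref{UniqueBarcode.0}. The paper itself gives no proof of Theorem~\ref{UniqueBarcode}, citing the thesis \cite{Nat}, but the presence of Azumaya's paper \cite{Azu} in the bibliography indicates that this local-endomorphism-ring route is exactly the intended argument. The one caveat you raise yourself --- infinite index sets --- is harmless here, since the barcodes in this paper have finite $\Lambda$ (and in any case Azumaya's theorem covers arbitrary direct sums of modules with local endomorphism rings).
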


Lemma~\ref{Lemma_GoodBarcode} indicates that if 
an $\N^d$-indexed persistence module $M$ admits 
a barcode ${\mathfrak B}_M$, then ${\mathfrak B}_M$ is a {\it good} barcode 
in the sense of \cite[Definition~10.11]{Les}; moreover, 
by Theorem~\ref{UniqueBarcode}, 
the barcode ${\mathfrak B}_M$ of $M$ is well-defined. 

\begin{example}   \label{example_reviewer2}
Consider the $\N^2$-indexed persistence module $M$ defined by 
$$
M = \kbf_{\la^{(1)}} \oplus \kbf_{\la^{(2)}} \oplus \kbf_{\la^{(3)}}
$$
where $\la^{(1)} = (3, 2, 2)$, $\la^{(2)} = (2, 1)$ and $\la^{(3)} = (3, 3)$ 
are $2$-dimensional partitions. By Definition~\ref{goodbarcode}, 
the barcode ${\mathfrak B}_M$ of $M$ is the multiset consisting of 
$D_{\la^{(1)}}^{\rm ei}, D_{\la^{(2)}}^{\rm ei}, D_{\la^{(3)}}^{\rm ei}$
which are illustrated in Figure~5.	
	\begin{center}
		\begin{tikzpicture}[x=0.5cm,y=0.5cm,step=0.5cm]
			\fill[blue!30] (0,3) -- (1,3) -- (1,2) -- (2,2) -- (3,2) -- (3,0) -- (0,0) -- cycle;
			\draw[->] (0,0) -- (4,0);
			\draw[->] (0,0) -- (0,4);
			=========================
			\draw[dashed] (0,3) -- (1,3);
			\draw[dashed] (1,3) -- (1,2);
			\draw[dashed] (1,2) -- (3,2);
			\draw[dashed] (3,2) -- (3,0);
			\draw[dashed] (3,1) -- (3,0);
			\filldraw[fill=white] (0,3) circle (2pt);
			\filldraw[fill=white] (3,0) circle (2pt);
			
			\node at (-0.4,-0.4) {0};
			\foreach \x in {1,2}
			{
				\node[fill,circle, inner sep=1pt,label=below:$\x$] at (\x,0) {};
				\node[fill, circle, inner sep=1pt,label=left:$\x$] at (0,\x) {};
			}
			\node[circle, inner sep=1pt, label=left:3] at (0,3) {};
			\node[circle, inner sep=1pt, label=below:3] at (3,0) {};
		\end{tikzpicture}
		\begin{tikzpicture}[x=0.5cm,y=0.5cm,step=0.5cm]
			\fill[blue!30] (0,2) -- (1,2) -- (1,1) -- (2,1) -- (2,0) -- (0,0) -- cycle;
			\draw[->] (0,0) -- (3,0);
			\draw[->] (0,0) -- (0,3);
			=========================
			\draw[dashed] (0,2) -- (1,2);
			\draw[dashed] (1,2) -- (1,1);
			\draw[dashed] (1,1) -- (2,1);
			\draw[dashed] (2,1) -- (2,0);
			\filldraw[fill=white] (0,2) circle (2pt);
			\filldraw[fill=white] (2,0) circle (2pt);
			
			\node at (-0.4,-0.4) {0};
			\foreach \x in {1}
			{
				\node[fill,circle, inner sep=1pt,label=below:$\x$] at (\x,0) {};
				\node[fill, circle, inner sep=1pt,label=left:$\x$] at (0,\x) {};
			}
			\node[circle, inner sep=1pt, label=left:2] at (0,2) {};
			\node[circle, inner sep=1pt, label=below:2] at (2,0) {};
		\end{tikzpicture}
		\begin{tikzpicture}[x=0.5cm,y=0.5cm,step=0.5cm]
			\fill[blue!30] (0,3) -- (2,3) -- (2,0)-- (0,0) -- cycle;
			\draw[->] (0,0) -- (4,0);
			\draw[->] (0,0) -- (0,4);
			=========================
			\draw[dashed] (0,3) -- (2,3);
			\draw[dashed] (2,3) -- (2,0);
			\filldraw[fill=white] (0,3) circle (2pt);
			\filldraw[fill=white] (2,0) circle (2pt);
			
			\node at (-0.4,-0.4) {0};
			\foreach \x in {1,2}
			{
				\node[fill, circle, inner sep=1pt,label=left:$\x$] at (0,\x) {};
			}
			
			\foreach \x in {1}
			{
				\node[fill, circle, inner sep=1pt,label=below:$\x$] at (\x, 0) {};
			}
			\node[circle, inner sep=1pt, label=left:3] at (0,3) {};
			\node[circle, inner sep=1pt, label=below:2] at (2,0) {};
		\end{tikzpicture}
	\end{center}
	\centerline{\footnotesize \text{\bf Figure 5. } From left to right, \text{$D_\la^{\rm ei}$ for 
			$ \la^{(1)} = (3, 2, 2) $, $ \la^{(2)} = (2, 1) $ and $ \la^{(3)} = (3, 3) $.}}
\par\noindent
Note that the three Young diagrams $D_{\la^{(1)}}, D_{\la^{(2)}}, D_{\la^{(3)}}$ 
are the three shaded regions in Figure~5 together with 
their respective dashed boundaries. 
\end{example}

\section{\bf The rank invariant}   
\label{section_rank}

In this section, we study the relation between the rank invariant 
which is defined in \cite{CZ} 
and the $\N^d$-indexed persistence modules which admit barcodes. 
We will prove a necessary and sufficient condition for 
two $\N^d$-indexed persistence modules admitting barcodes to have 
the same rank invariant. 

The following definitions are adopted from \cite{CZ} (see also \cite{Les}). 

\begin{definition}  \label{rk-invariant}
\begin{enumerate}
\item[{\rm (i)}] 
Given a poset $P$, define
$$
P^{\le} = \{(x,y) \in P \times P|x \le y\}.
$$

\item[{\rm (ii)}] 
The {\it rank invariant} of an $\N^d$-indexed persistence module $M$ is 
the function ${\rm Rank}^M: (\N^d)^{\le} \to \N$ given by
$$
{\rm Rank}^M(x, y) = \text{\rm Rank}(M_{x,y}).
$$
\end{enumerate}
\end{definition}

\begin{lemma}   \label{Lemma2_GoodBarcode}
Assume that an $\N^d$-indexed persistence module $M$ admits 
a barcode ${\mathfrak B}_M$. Then,
\begin{eqnarray}    \label{Lemma2_GoodBarcode.0} 
{\rm Rank}^M(x, y) = |\{S \in {\mathfrak B}_M| x,y \in S \}|
\end{eqnarray}
for every $(x,y) \in (\N^d)^{\le}$.
\end{lemma}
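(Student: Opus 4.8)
The plan is to observe that this statement is essentially a translation of Lemma~\ref{Lemma_GoodBarcode} into the language of the rank invariant. By Definition~\ref{rk-invariant}~(ii), the rank invariant ${\rm Rank}^M$ is defined on $(\N^d)^{\le}$ by ${\rm Rank}^M(x,y) = \text{\rm Rank}(M_{x,y})$. So for any pair $(x,y) \in (\N^d)^{\le}$ — which by definition of $(\N^d)^{\le}$ means exactly that $x, y \in \N^d$ with $x \le y$ — the quantity ${\rm Rank}^M(x,y)$ literally equals $\text{\rm Rank}(M_{x,y})$.

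First I would unwind this definition, and then I would directly invoke Lemma~\ref{Lemma_GoodBarcode}, which under the hypothesis that $M$ admits a barcode ${\mathfrak B}_M$ gives $\text{\rm Rank}(M_{x,y}) = |\{S \in {\mathfrak B}_M \mid x,y \in S\}|$ for all such $x, y$. Chaining the two equalities yields \eqref{Lemma2_GoodBarcode.0}. No induction, no case analysis, and no new computation is needed here; the content has already been established in Lemma~\ref{Lemma_GoodBarcode} (whose proof, in turn, rests on the decomposition \eqref{goodbarcode.0}, the componentwise computation of $\text{\rm Rank}$ via \eqref{Lemma_GoodBarcode.1}, and the identification $D_\la^{\rm int} = D_\la^{\rm ei} \cap \N^d$ from Remark~\ref{Dlaint-ei}).

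There is essentially no obstacle: the only thing to be careful about is matching the index sets, i.e.\ confirming that ``$(x,y) \in (\N^d)^{\le}$'' and ``$x, y \in \N^d$ with $x \le y$'' describe the same collection of pairs, which is immediate from Definition~\ref{rk-invariant}~(i). One could alternatively fold this lemma into Lemma~\ref{Lemma_GoodBarcode} as a remark, but stating it separately is convenient for the proof of Theorem~\ref{ThmRk-inv-MN}, where the rank invariant is the object of study.

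\begin{proof}
By Definition~\ref{rk-invariant}~(i), an element of $(\N^d)^{\le}$ is precisely a pair $(x,y)$ with $x, y \in \N^d$ and $x \le y$, and by Definition~\ref{rk-invariant}~(ii) we have ${\rm Rank}^M(x,y) = \text{\rm Rank}(M_{x,y})$ for every such pair. Since $M$ admits the barcode ${\mathfrak B}_M$, Lemma~\ref{Lemma_GoodBarcode} gives
$$
\text{\rm Rank}(M_{x,y}) = |\{S \in {\mathfrak B}_M \mid x,y \in S\}|
$$
for all $x, y \in \N^d$ with $x \le y$. Combining the two identities yields \eqref{Lemma2_GoodBarcode.0} for every $(x,y) \in (\N^d)^{\le}$.
\end{proof}
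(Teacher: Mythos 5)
Your proof is correct and matches the paper exactly: the paper's own proof is the one-liner ``Follows directly from Definition~\ref{rk-invariant}~(ii) and Lemma~\ref{Lemma_GoodBarcode},'' which is precisely the unwinding-and-chaining argument you give. Nothing further is needed.
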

\begin{proof}
Follows directly from Definition~\ref{rk-invariant}~(ii) and 
Lemma~\ref{Lemma_GoodBarcode}.
\end{proof}

\begin{lemma}  \label{rk-inv-MN}
Let $d \ge 1$. 
Let $M$ and $N$ be $\N^d$-indexed persistence modules admitting barcodes:
$$
M = \bigoplus_{i \in \Lambda_1} \kbf_{\la^{(i)}}
\qquad
\text{and}
\qquad
N = \bigoplus_{\ell \in \Lambda_2} \kbf_{\mu^{(\ell)}}
$$
with $|\la^{(i)}|, |\mu^{(\ell)}| \ne 0$. If ${\rm Rank}^M = {\rm Rank}^N$, 
then for every $(i_1, \ldots, i_{d-1}) \in \N^{d-1}$, the two multisets 
\begin{eqnarray}    \label{rk-inv-MN.01}
\big \{ \la^{(i)}_{i_1, \ldots, i_{d-1}}|i \in \Lambda_1 \text{ and } 
   \la^{(i)}_{i_1, \ldots, i_{d-1}} > 0\big \}
\end{eqnarray}
and 
\begin{eqnarray}    \label{rk-inv-MN.02}
\big \{ \mu^{(\ell)}_{i_1, \ldots, i_{d-1}}|\ell \in \Lambda_2 \text{ and } 
   \mu^{(\ell)}_{i_1, \ldots, i_{d-1}} > 0\big \}
\end{eqnarray}
are equal. 
\end{lemma}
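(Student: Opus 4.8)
The plan is to read off the multiset of parts at a fixed index $(i_1,\ldots,i_{d-1})$ by testing the rank invariants only along the corresponding axis-parallel column of $\N^d$. Fix $(i_1,\ldots,i_{d-1}) \in \N^{d-1}$, put $x = (i_1,\ldots,i_{d-1},0)$, and for each $h \in \N$ put $y_h = (i_1,\ldots,i_{d-1},h)$, so that $x \le y_h$. The first step is the elementary observation, immediate from \eqref{Youngdiagram.1}, that for any $d$-dimensional partition $\la$ one has $y_h \in D_\la^{\rm int}$ if and only if $h < \la_{i_1,\ldots,i_{d-1}}$; in particular $x \in D_\la^{\rm int}$ iff $\la_{i_1,\ldots,i_{d-1}} > 0$, and $x$ and $y_h$ lie in $D_\la^{\rm int}$ simultaneously iff $\la_{i_1,\ldots,i_{d-1}} > h$. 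Since $x$ and $y_h$ are integral points, Remark~\ref{Dlaint-ei} shows that the same statements hold verbatim with $D_\la^{\rm ei}$ in place of $D_\la^{\rm int}$, so the passage to the extended interior removes no column point we wish to keep.

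The second step applies Lemma~\ref{Lemma2_GoodBarcode}. As the given decompositions involve no translations, Definition~\ref{goodbarcode} gives ${\mathfrak B}_M = \{ D_{\la^{(i)}}^{\rm ei} \}_{i \in \Lambda_1}$ and ${\mathfrak B}_N = \{ D_{\mu^{(\ell)}}^{\rm ei} \}_{\ell \in \Lambda_2}$ as multisets. Writing $A_1$ for the multiset \eqref{rk-inv-MN.01} and $A_2$ for the multiset \eqref{rk-inv-MN.02}, and noting that $a > h$ with $h \ge 0$ forces $a > 0$ so that only the positive parts ever contribute, the first step yields, for every $h \in \N$,
\begin{eqnarray*}
{\rm Rank}^M(x, y_h) &=& \bigl| \{ a \in A_1 \mid a > h \} \bigr|, \\
{\rm Rank}^N(x, y_h) &=& \bigl| \{ a \in A_2 \mid a > h \} \bigr|.
\end{eqnarray*}
Because $M_x$ and $N_x$ are objects of ${\bf Vec}_k$, hence finite-dimensional, $A_1$ and $A_2$ are finite multisets of elements of $\{1,2,\ldots\}\cup\{+\infty\}$; and the hypothesis ${\rm Rank}^M = {\rm Rank}^N$ says precisely that their \emph{tail-count functions} $h \mapsto |\{ a \mid a > h \}|$ agree on all of $\N$.

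The third step is to recover $A_1 = A_2$ from this. For a finite multiset $A$ of elements of $\{1,2,\ldots\}\cup\{+\infty\}$, the multiplicity of an integer $v \ge 1$ in $A$ equals $|\{ a \in A \mid a > v-1 \}| - |\{ a \in A \mid a > v \}|$, and the multiplicity of $+\infty$ in $A$ equals $\lim_{h \to \infty} |\{ a \in A \mid a > h \}|$, a limit attained for all large $h$ since $A$ is finite. Thus the tail-count function determines $A$ completely, so agreement of the tail-count functions of $A_1$ and $A_2$ forces $A_1 = A_2$ as multisets, which is exactly the assertion for the index $(i_1,\ldots,i_{d-1})$. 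As $(i_1,\ldots,i_{d-1})$ was arbitrary, the lemma follows.

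I expect no serious obstacle here: once one restricts attention to the column pairs $x \le y_h$, Lemma~\ref{Lemma2_GoodBarcode} together with \eqref{Youngdiagram.1} carries the computation, and the remainder is bookkeeping. The most delicate point — still entirely routine — is the final step, where reading a multiset off its tail-count function, and in particular recovering the multiplicity of $+\infty$, relies on the finiteness of $A_1$ and $A_2$ (which is why the reduction to positive parts via finite-dimensionality of $M_x, N_x$ is worth isolating).
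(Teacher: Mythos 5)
Your proof is correct and follows essentially the same route as the paper: both evaluate the rank invariant along the column of pairs $\bigl((i_1,\ldots,i_{d-1},0),(i_1,\ldots,i_{d-1},h)\bigr)$ via Lemma~\ref{Lemma2_GoodBarcode} and recover the multiset of positive parts at $(i_1,\ldots,i_{d-1})$ from the resulting counts. The only difference is presentational: the paper peels off the distinct values $a_1 > a_2 > \cdots$ together with their multiplicities iteratively, whereas you invert the tail-count function $h \mapsto |\{a \in A \mid a > h\}|$ directly, correctly using the finite-dimensionality of $M_x$ to justify recovering the multiplicity of $+\infty$.
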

\begin{proof}
Fix $(i_1, \ldots, i_{d-1}) \in \N^{d-1}$.
Let $a_1, \ldots, a_s$ be the distinct values in the multiset \eqref{rk-inv-MN.01} 
with $a_1 > \ldots > a_s$, and let $m_1, \ldots, m_s$ be the multiplicities of 
$a_1, \ldots, a_s$ respectively in the multiset \eqref{rk-inv-MN.01}. Similarly, 
let $b_1, \ldots, b_t$ be the distinct values in the multiset \eqref{rk-inv-MN.02} 
with $b_1 > \ldots > b_t$, and let $n_1, \ldots, n_t$ be the multiplicities of 
$b_1, \ldots, b_t$ respectively in the multiset \eqref{rk-inv-MN.02}. 

Without loss of generality, assume $a_1 \ge b_1$.
By \eqref{Lemma2_GoodBarcode.0},
$$
  {\rm Rank}^M\big ((i_1, \ldots, i_{d-1}, 0), (i_1, \ldots, i_{d-1}, h)\big )
= \begin{cases}
  m_1 &\text{if $a_2 \le h < a_1$}, \\
  0 &\text{if $h \ge a_1$}.
  \end{cases}  
$$
Since ${\rm Rank}^M = {\rm Rank}^N$, we conclude that 
$$
  {\rm Rank}^N\big ((i_1, \ldots, i_{d-1}, 0), (i_1, \ldots, i_{d-1}, h)\big )
= \begin{cases}
  m_1 &\text{if $a_2 \le h < a_1$}, \\
  0 &\text{if $h \ge a_1$}.
  \end{cases} 
$$
It follows that $b_1 = a_1$ and $n_1 = m_1$.  

Next, without loss of generality, assume $a_2 \ge b_2$. 
By \eqref{Lemma2_GoodBarcode.0} again,
$$
  {\rm Rank}^M\big ((i_1, \ldots, i_{d-1}, 0), (i_1, \ldots, i_{d-1}, h)\big )
= \begin{cases}
  m_1 + m_2 &\text{if $a_3 \le h < a_2$}, \\
  m_1 &\text{if $a_2 \le h < a_1$}, \\
  0 &\text{if $h \ge a_1$}.
  \end{cases}  
$$
Since ${\rm Rank}^M = {\rm Rank}^N$, we conclude that 
$$
  {\rm Rank}^N\big ((i_1, \ldots, i_{d-1}, 0), (i_1, \ldots, i_{d-1}, h)\big )
= \begin{cases}
  m_1 + m_2 &\text{if $a_3 \le h < a_2$}, \\
  m_1 &\text{if $a_2 \le h < a_1$}, \\
  0 &\text{if $h \ge a_1$}.
  \end{cases}  
$$
It follows that $b_2 = a_2$ and $n_2 = m_2$. 

Repeating the above arguments, we see that $s=t$, $a_i = b_i$ and 
$m_i = n_i$ for all $1 \le i \le s=t$. Therefore, the two multisets 
\eqref{rk-inv-MN.01} and \eqref{rk-inv-MN.02} are equal.
\end{proof}

Unfortunately, under the conditions in Lemma~\ref{rk-inv-MN}, 
the two multisets $\big \{ \la^{(i)}|i \in \Lambda_1 \big \}$
and $\big \{ \mu^{(\ell)}|\ell \in \Lambda_2 \big \}$
may not be the same. An example is given below. 

\begin{example}   \label{example_MN}
Let $M, N: \N^2 \to {\bf Vec}_k$ be persistence modules given by
$$
M = \kbf_{\la^{(1)}} \oplus \kbf_{\la^{(2)}}
\qquad
\text{and}
\qquad
N = \kbf_{\mu^{(1)}} \oplus \kbf_{\mu^{(2)}}
$$
where $\la^{(1)} = (2)$, $\la^{(2)} = (1^2)$, $\mu^{(1)} = (1)$, 
$\mu^{(2)} = (2,1)$ are $2$-dimensional partitions.
Intuitively, $M$ and $N$ can be illustrated by 
\[
M = \begin{tikzcd}
	0 \arrow[r] & 0  \\
	k \arrow[r] \arrow[u] & 0 \arrow[u]\\
	k \arrow[r] \arrow[u]  & 0 \arrow[u]
\end{tikzcd}
 \bigoplus 
 \begin{tikzcd}
 	0 \arrow[r] & 0	\arrow[r] & 0  \\
 	k \arrow[r] \arrow[u] & k \arrow[r] \arrow[u] 	&  0 \arrow[u]
 \end{tikzcd}
\]
and 
\[
N = \begin{tikzcd}
	0 \arrow[r] & 0  \\
	k \arrow[r] \arrow[u] & 0 \arrow[u] \\
\end{tikzcd}
\bigoplus 
\begin{tikzcd}
	0 \arrow[r] & 0	 	&	  \\
	k \arrow[r] \arrow[u] & 0  \arrow[r] \arrow[u] 	&   0 \\
	k \arrow[r] \arrow[u] & k \arrow[r] \arrow[u] 	&  0  \arrow[u]
\end{tikzcd}
\]
where all the nontrivial maps are the identity maps. Then 
$$
 {\rm Rank }^M(x,y) = {\rm Rank }^N(x,y) = \begin{cases}
 	1	&	\text{if } x = (0,0) \text{ and } y = (1,0), \\
	1	&	\text{if } x = (0,0) \text{ and } y = (0,1), \\
	0	&	\text{ otherwise }.
 \end{cases}
$$
So ${\rm Rank }^M = {\rm Rank }^N$ as asserted by Lemma~\ref{rk-inv-MN}. 
However, the two multisets $\{\la^{(1)}, \la^{(2)}\}$ and $\{\mu^{(1)}, \mu^{(2)}\}$ 
are not equal. By Theorem~\ref{UniqueBarcode}, 
the two $\N^2$-indexed persistence modules $M$ and $N$ are not isomorphic. 
\end{example}

Our next goal is to present a necessary and sufficient condition for 
two $\N^d$-indexed persistence modules admitting barcodes to have 
the same rank invariants. We start with the projections $\mathfrak p$ and 
$\mathfrak q$.

\begin{definition}   \label{pd-1qd}
For $d \ge 2$, we define $\mathfrak p: \N^d \to \N^{d-1}$ by 
$$
\mathfrak p(x_1, \ldots, x_d) = (x_1, \ldots, x_{d-1}),
$$
and define $\mathfrak q: \N^d \to \N$ by 
$$
\mathfrak q(x_1, \ldots, x_d) = x_d.
$$
For $ d = 1 $, we define $  \mathfrak p : \N \to \N^0 = \{O\} $ by $ \mathfrak p(x) = O $.   
\end{definition}

\begin{lemma}   \label{xyTaLa}
Let $a \in \N^d$ and $\la$ be a $d$-dimensional partition with $|\la| \ne 0$.
Let $(x, y) \in (\N^d)^{\le}$. Then, $x, y \in \T_{a} (D^{\rm ei}_{\la})$ 
if and only if $a \le x$ and $\mathfrak q(y-a) < \la_{\mathfrak p(y-a)}$.
\end{lemma}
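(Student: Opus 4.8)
The plan is to unwind both sides of the claimed equivalence by reducing to the untranslated case and then using the explicit description of $D_\la^{\rm int}$ given in \eqref{Youngdiagram.1}. First I would observe that by Definition~\ref{generators}~(iv) and Definition~\ref{kla}~(i), a point $z \in \N^d$ lies in $\T_a(D^{\rm ei}_\la)$ (equivalently, by Remark~\ref{Dlaint-ei}, in $\T_a(D^{\rm int}_\la)$, since we are intersecting with $\N^d$) if and only if $z \ge a$ and $z - a \in D^{\rm int}_\la$. So the condition ``$x, y \in \T_a(D^{\rm ei}_\la)$'' is equivalent to the conjunction $a \le x$, $a \le y$, $x - a \in D^{\rm int}_\la$, and $y - a \in D^{\rm int}_\la$. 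Since $x \le y$ we have $a \le x \Rightarrow a \le y$, so the clause $a \le y$ is redundant and the condition simplifies to: $a \le x$, $x - a \in D^{\rm int}_\la$, and $y - a \in D^{\rm int}_\la$.

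Next I would translate the membership conditions $x - a \in D^{\rm int}_\la$ and $y - a \in D^{\rm int}_\la$ using \eqref{Youngdiagram.1}, which says $w \in D^{\rm int}_\la$ iff, writing $w = (\mathfrak p(w), \mathfrak q(w))$ in the notation of Definition~\ref{pd-1qd}, one has $\mathfrak q(w) < \la_{\mathfrak p(w)}$. Thus $y - a \in D^{\rm int}_\la$ is exactly $\mathfrak q(y-a) < \la_{\mathfrak p(y-a)}$, which is the second condition in the statement. It remains to check that, in the presence of $a \le x$, $x \le y$, and $\mathfrak q(y-a) < \la_{\mathfrak p(y-a)}$, the condition $x - a \in D^{\rm int}_\la$ is automatic. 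This is where the monotonicity of the partition $\la$ (Definition~\ref{d-part}) enters: from $x \le y$ we get $\mathfrak p(x-a) \le \mathfrak p(y-a)$ and $\mathfrak q(x-a) \le \mathfrak q(y-a)$ componentwise, hence $\la_{\mathfrak p(x-a)} \ge \la_{\mathfrak p(y-a)}$, and therefore $\mathfrak q(x-a) \le \mathfrak q(y-a) < \la_{\mathfrak p(y-a)} \le \la_{\mathfrak p(x-a)}$, giving $x - a \in D^{\rm int}_\la$. Conversely, if $x, y \in \T_a(D^{\rm ei}_\la)$ then certainly $a \le x$ and $\mathfrak q(y-a) < \la_{\mathfrak p(y-a)}$, so both implications hold.

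The only mild subtlety — and the one place I would be careful — is the handling of the degenerate cases: when $d = 1$ we have $\mathfrak p(x-a) = O$ so $\la_{\mathfrak p(x-a)} = \la_O$ is the single part, and the argument still goes through verbatim; and when some part $\la_{\mathfrak p(y-a)} = +\infty$, the strict inequality $\mathfrak q(y-a) < +\infty$ holds for every integer, which is consistent with $D_\la = \R^d$ in that column. I would also note explicitly that the hypothesis $|\la| \ne 0$ is not actually needed for this lemma but is carried along for consistency with the surrounding setup. I do not expect any real obstacle here; the proof is a direct translation through \eqref{Youngdiagram.1} together with one application of the defining monotonicity of $d$-dimensional partitions.
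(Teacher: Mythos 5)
Your proof is correct and follows essentially the same route as the paper's: reduce to the untranslated case, note that for integral points membership in $D^{\rm ei}_\la$ is governed by $\mathfrak q(z) < \la_{\mathfrak p(z)}$, and observe that the condition on $x-a$ is automatic. You actually spell out more carefully than the paper does why $x-a \in D^{\rm int}_\la$ follows from $x \le y$ together with the monotonicity of the partition, which is a welcome addition rather than a divergence.
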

\begin{proof}
Recall the maps $\mathfrak p$ and $\mathfrak q$ from Definition~\ref{pd-1qd}.
By \eqref{Youngdiagram.2}, $z \in D^{\rm ei}_{\la}$ if and only if
$O \le z$ and $\mathfrak q(z) < \la_{\mathfrak p(z)}$.
Since $(x, y) \in (\N^d)^{\le}$, we have $x \le y$. 
Thus, 
\begin{eqnarray*}
 x, y \in \T_{a}(D^{\rm ei}_{\la}) 
&\text{\rm if and only if}&x-a, y-a \in D^{\rm ei}_{\la},  \\
&\text{\rm if and only if}&a \le x \text{ and } y-a \in D^{\rm ei}_{\la}, \\
&\text{\rm if and only if}&a \le x \text{ and } 
    \mathfrak q(y-a) < \la_{\mathfrak p(y-a)}.
\end{eqnarray*}
This completes the proof of the lemma.
\end{proof}

The following is the main result of the paper.

\begin{theorem}  \label{ThmRk-inv-MN}
Let $d \ge 1$. 
Let $M$ and $N$ be $\N^d$-indexed persistence modules admitting barcodes:
$$
M = \bigoplus_{i \in \Lambda_1} \T_{a^{(i)}}\kbf_{\la^{(i)}}
\qquad
\text{and}
\qquad
N = \bigoplus_{\ell \in \Lambda_2} \T_{b^{(\ell)}}\kbf_{\mu^{(\ell)}}
$$
where $|\la^{(i)}| \ne 0$ and $|\mu^{(\ell)}| \ne 0$ for all 
$i \in \Lambda_1$ and $\ell \in \Lambda_2$. Then, ${\rm Rank}^M = {\rm Rank}^N$ 
if and only if for every $(i_1, \ldots, i_{d-1}) \in \N^{d-1}$, the two multisets 
\begin{eqnarray}    \label{ThmRk-inv-MN.01}
\Big \{\big (a^{(i)}, (\la^{(i)})_{i_1, \ldots, i_{d-1}} \big )
   |i \in \Lambda_1 \text{ and } (\la^{(i)})_{i_1, \ldots, i_{d-1}} > 0 \Big \}
\end{eqnarray}
and 
\begin{eqnarray}    \label{ThmRk-inv-MN.02}
\Big  \{ \big (b^{(\ell)}, (\mu^{(\ell)})_{i_1, \ldots, i_{d-1}} \big )
   |\ell \in \Lambda_2 \text{ and } 
   (\mu^{(\ell)})_{i_1, \ldots, i_{d-1}} > 0 \Big \}
\end{eqnarray}
are equal. 
\end{theorem}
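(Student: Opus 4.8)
The plan is to prove both implications, the ``if'' part (equality of the multisets forces ${\rm Rank}^M={\rm Rank}^N$) being a direct computation, and the ``only if'' part an induction on the total number of summands $|\Lambda_1|+|\Lambda_2|$. For the ``if'' part, I would start from Lemma~\ref{Lemma2_GoodBarcode}, so that ${\rm Rank}^M(x,y)$ is the number of $i\in\Lambda_1$ with $x,y\in\T_{a^{(i)}}(D_{\la^{(i)}}^{\rm ei})$, and then apply Lemma~\ref{xyTaLa}, which rewrites this condition as $a^{(i)}\le x$ and $\mathfrak q(y-a^{(i)})<(\la^{(i)})_{\mathfrak p(y-a^{(i)})}$. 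Since $\mathfrak q(y-a^{(i)})\ge 0$, only summands with $(\la^{(i)})_{\mathfrak p(y-a^{(i)})}>0$ contribute, so one may group the count by the value $w:=\mathfrak p(y)-\mathfrak p(a^{(i)})\in\N^{d-1}$, which for fixed $y$ pins down $\mathfrak p(a^{(i)})$. This exhibits ${\rm Rank}^M(x,y)$ as a sum over $w\le\mathfrak p(y)$ of a quantity depending only on $x$, $y$, $w$ and the multiset \eqref{ThmRk-inv-MN.01} attached to $w$; the same formula with \eqref{ThmRk-inv-MN.02} computes ${\rm Rank}^N$, so coincidence of all these multisets yields ${\rm Rank}^M={\rm Rank}^N$.

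For the converse, assume ${\rm Rank}^M={\rm Rank}^N$ and induct on $|\Lambda_1|+|\Lambda_2|$. If $M=0$, then ${\rm Rank}^N=0$; choosing $b^{(\ell_0)}$ minimal in the product order among $\{b^{(\ell)}\}_{\ell\in\Lambda_2}$ and evaluating at $(b^{(\ell_0)},b^{(\ell_0)})$, Lemma~\ref{xyTaLa} together with $|\mu^{(\ell)}|\ne 0$ (so that the top part $(\mu^{(\ell)})_O$ is positive) forces ${\rm Rank}^N(b^{(\ell_0)},b^{(\ell_0)})\ge 1$ unless $\Lambda_2=\emptyset$; hence $N=0$ and both multisets are empty, and the case $N=0$ is symmetric. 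For the inductive step with $M,N\ne 0$, pick $a^*$ minimal in the product order among the finite set $\{a^{(i)}\}_{i\in\Lambda_1}\cup\{b^{(\ell)}\}_{\ell\in\Lambda_2}$; the point is that any element of this set that is $\le a^*$ equals $a^*$. Evaluating at pairs $(a^*,y)$ with $y\ge a^*$, Lemma~\ref{xyTaLa} shows that only summands based at $a^*$ contribute, giving
\[
{\rm Rank}^M(a^*,y)=\bigl|\{i\in\Lambda_1:a^{(i)}=a^*,\ \mathfrak q(y-a^*)<(\la^{(i)})_{\mathfrak p(y-a^*)}\}\bigr|
\]
and the analogue for $N$. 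As $y$ runs over $\{y\ge a^*\}$, the pair $(\mathfrak p(y-a^*),\mathfrak q(y-a^*))$ takes every value in $\N^{d-1}\times\N$; fixing its first coordinate $v$ and letting the second vary, then taking successive differences of these counts (finiteness of $\Lambda_1,\Lambda_2$ disposing of the value $+\infty$ via eventual constancy), we obtain, for every $v$, the equality of multisets $\{(\la^{(i)})_v:a^{(i)}=a^*,\ (\la^{(i)})_v>0\}=\{(\mu^{(\ell)})_v:b^{(\ell)}=a^*,\ (\mu^{(\ell)})_v>0\}$.

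To finish, split $M=M_0\oplus M'$ and $N=N_0\oplus N'$, where $M_0,N_0$ collect the summands based at $a^*$. By Lemma~\ref{xyTaLa} again, for $a^*\le x\le y$ one has ${\rm Rank}^{M_0}(x,y)={\rm Rank}^M(a^*,y)$, independent of $x$ beyond $a^*\le x$, while ${\rm Rank}^{M_0}(x,y)=0$ when $a^*\not\le x$; comparing with the same statement for $N_0$ and using the displayed identity gives ${\rm Rank}^{M_0}={\rm Rank}^{N_0}$ on all of $(\N^d)^{\le}$. Since the rank of a direct sum of maps is the sum of the ranks, ${\rm Rank}^{M'}={\rm Rank}^M-{\rm Rank}^{M_0}={\rm Rank}^N-{\rm Rank}^{N_0}={\rm Rank}^{N'}$; as $a^*$ is attained by at least one summand, $M'$ and $N'$ together have strictly fewer summands, so the inductive hypothesis gives, for every $v$, the equality of the multisets \eqref{ThmRk-inv-MN.01} and \eqref{ThmRk-inv-MN.02} formed from $M'$ and $N'$. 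Finally, for each $v$ the multiset \eqref{ThmRk-inv-MN.01} for $M$ is the disjoint union of its sub-multiset of pairs with first coordinate $a^*$ (which matches that of $N$ by the previous step) and the multiset \eqref{ThmRk-inv-MN.01} for $M'$ (which matches that of $N'$ by induction), and likewise for $N$; hence \eqref{ThmRk-inv-MN.01} equals \eqref{ThmRk-inv-MN.02}, completing the induction.

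The step I expect to be the main obstacle — and the reason a naive ``cancel a common summand'' induction fails — is exactly the phenomenon of Example~\ref{example_MN}: ${\rm Rank}^M={\rm Rank}^N$ does not give $M\cong N$, so there need not be a single summand common to $M$ and $N$. The remedy is to peel off \emph{all} summands based at a product-order-minimal point $a^*$ at once; this works because probing $M$ only at $a^*$ isolates exactly the part of the barcode supported there, and, crucially, because the resulting identity ${\rm Rank}^{M_0}={\rm Rank}^{N_0}$ holds on all of $(\N^d)^{\le}$ and not merely at $x=a^*$, which is what lets the hypothesis ${\rm Rank}^M={\rm Rank}^N$ descend to the complementary submodules $M'$ and $N'$. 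The base case and the final reassembly of multisets are routine, and finiteness of $\Lambda_1,\Lambda_2$ is used only to produce a minimal $a^*$ and to pass from counts to multiset multiplicities at the value $+\infty$.
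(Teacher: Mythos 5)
Your proof is correct and follows essentially the same route as the paper: the forward direction is the same computation via Lemma~\ref{Lemma2_GoodBarcode} and Lemma~\ref{xyTaLa}, and the converse is the same induction that peels off all summands based at a product-order-minimal point, compares ranks along vertical rays over that point, and descends to the complementary summands. The only cosmetic difference is that the paper isolates the ``all summands based at one point'' step as Lemma~\ref{rk-inv-MN}, whereas you inline the same successive-differences argument.
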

\begin{proof}
First of all, assume that the two multisets \eqref{ThmRk-inv-MN.01} and 
\eqref{ThmRk-inv-MN.02} are equal. Fix $(x, y) \in (\N^d)^{\le}$. 
By Lemma~\ref{Lemma2_GoodBarcode}, 
$$
{\rm Rank}^M(x, y) = |\{S \in {\mathfrak B}_M| x,y \in S \}|.
$$
Since ${\mathfrak B}_M 
= \big \{ \T_{a^{(i)}} (D^{\rm ei}_{\la^{(i)}}) \big \}_{i \in \Lambda_1}$, 
we see from Lemma~\ref{xyTaLa} that ${\rm Rank}^M(x, y)$ is equal to 
the cardinality of the multiset
\begin{eqnarray}    \label{ThmRk-inv-MN.1}
\Big \{\big (a^{(i)}, (\la^{(i)})_{\mathfrak p(y-a^{(i)})} \big )
   |i \in \Lambda_1, a^{(i)} \le x \text{ and } 
   \mathfrak q(y-a^{(i)}) < (\la^{(i)})_{\mathfrak p(y-a^{(i)})} \Big \}.
\end{eqnarray}
Similarly, ${\rm Rank}^N(x, y)$ is equal to 
the cardinality of the multiset
\begin{eqnarray}    \label{ThmRk-inv-MN.2}
\Big \{\big (b^{(\ell)}, (\mu^{(\ell)})_{\mathfrak p(y-b^{(\ell)})} \big )
   |\ell \in \Lambda_2, b^{(\ell)} \le x \text{ and } 
   \mathfrak q(y-b^{(\ell)}) < (\mu^{(\ell)})_{\mathfrak p(y-b^{(\ell)})} \Big \}.
\end{eqnarray}
Since the two multisets \eqref{ThmRk-inv-MN.01} and 
\eqref{ThmRk-inv-MN.02} are equal, so are the two multisets 
\eqref{ThmRk-inv-MN.1} and \eqref{ThmRk-inv-MN.2}. 
Hence ${\rm Rank}^M(x, y) = {\rm Rank}^N(x, y)$ for every $(x, y) \in (\N^d)^{\le}$. 
It follows that ${\rm Rank}^M = {\rm Rank}^N$.

Conversely, assume that ${\rm Rank}^M = {\rm Rank}^N$. 
Without loss of generality, assume that $a^{(1)}$ is a minimal element in 
the multiset 
\begin{eqnarray}    \label{ThmRk-inv-MN.3}
\{a^{(i)}|i \in \Lambda_1 \} \cup \{b^{(\ell)}|\ell \in \Lambda_2 \}.
\end{eqnarray}
Then, ${\rm Rank}^N(a^{(1)}, a^{(1)}) = {\rm Rank}^M(a^{(1)}, a^{(1)}) > 0$. 
By Lemma~\ref{Lemma2_GoodBarcode}, $a^{(1)} \in {\mathfrak B}_N$.
Since ${\mathfrak B}_N$ is the multiset consisting of all 
$\T_{b^{(\ell)}}\big (D^{\rm ei}_{\mu^{(\ell)}} \big )$ with $\ell \in \Lambda_2$, 
we have $a^{(1)} \in \T_{b^{(\ell)}}\big (D^{\rm ei}_{\mu^{(\ell)}} \big )$ 
for some $\ell \in \Lambda_2$. By Lemma~\ref{xyTaLa}, $b^{(\ell)} \le a^{(1)}$ 
for some $\ell \in \Lambda_2$. Since $a^{(1)}$ is a minimal element in 
the multiset \eqref{ThmRk-inv-MN.3}, we must have $b^{(\ell)} = a^{(1)}$. 
Without loss of generality, we may let $\ell = 1$ so that $b^{(1)} = a^{(1)}$. 
Put 
$$
\Lambda_1' = \{i \in \Lambda_1|a^{(i)} = a^{(1)}\}, \quad
\W M = \bigoplus_{i \in \Lambda_1'} \kbf_{\la^{(i)}}, 
$$
$$
M' = \bigoplus_{i \in \Lambda_1'} \T_{a^{(i)}}\kbf_{\la^{(i)}}
   = \T_{a^{(1)}} \W M, \quad
M'' = \bigoplus_{i \in \Lambda_1-\Lambda_1'} \T_{a^{(i)}}\kbf_{\la^{(i)}}
$$
and 
$$
\Lambda_2' = \{\ell \in \Lambda_2|b^{(\ell)} = b^{(1)} = a^{(1)}\}, \quad
\W N = \bigoplus_{\ell \in \Lambda_2'} \kbf_{\mu^{(\ell)}}, 
$$
$$
N' = \bigoplus_{\ell \in \Lambda_2'} \T_{b^{(\ell)}}\kbf_{\mu^{(\ell)}}
   = \T_{a^{(1)}} \W N, \quad
N'' = \bigoplus_{\ell \in \Lambda_2-\Lambda_2'} \T_{b^{(\ell)}}\kbf_{\mu^{(\ell)}}.
$$
We have $M = M' \oplus M''$ and $N = N' \oplus N''$.

\bigskip\noindent
{\bf Claim.} {\it ${\rm Rank}^{\W M} = {\rm Rank}^{\W N}$ and 
${\rm Rank}^{M''} = {\rm Rank}^{N''}$}.
\begin{proof}
Let $(x, y) \in (\N^d)^{\le}$. Since $\W M$ is generated at the origin $O$, 
$$
{\rm Rank}^{\W M}(x, y) = {\rm Rank}^{\W M}(O, y) 
= {\rm Rank}^{\T_{a^{(1)}} \W M}(a^{(1)}, y+a^{(1)})
= {\rm Rank}^{M'}(a^{(1)}, y+a^{(1)}).
$$
Since $a^{(1)}$ is a minimal element in the multiset $\{a^{(i)}|i \in \Lambda_1 \}$,  
we have $M''_{a^{(1)}, y+a^{(1)}} = 0$ and ${\rm Rank}^{M'}(a^{(1)}, y+a^{(1)}) 
= {\rm Rank}^{M}(a^{(1)}, y+a^{(1)})$. Thus, 
\begin{eqnarray}    \label{ThmRk-inv-MN.4}
{\rm Rank}^{\W M}(x, y) = {\rm Rank}^{M}(a^{(1)}, y+a^{(1)}).
\end{eqnarray}
Similarly, ${\rm Rank}^{\W N}(x, y) = {\rm Rank}^{N}(a^{(1)}, y+a^{(1)})$. 
Combining with ${\rm Rank}^M = {\rm Rank}^N$ and \eqref{ThmRk-inv-MN.4},
we conclude that ${\rm Rank}^{\W M}(x, y) = {\rm Rank}^{\W N}(x, y)$ 
for every $(x, y) \in (\N^d)^{\le}$. Therefore, we obtain
\begin{eqnarray}    \label{ThmRk-inv-MN.5}
{\rm Rank}^{\W M} = {\rm Rank}^{\W N}. 
\end{eqnarray}

Next, we prove that ${\rm Rank}^{M''} = {\rm Rank}^{N''}$. We have 
$$
  {\rm Rank}^{M'}(x, y) = {\rm Rank}^{\T_{a^{(1)}} \W M}(x, y)
= \begin{cases}
  {\rm Rank}^{\W M}(O, y-a^{(1)})&\text{if $a^{(1)} \le x$}, \\
  0 &\text{otherwise}.
  \end{cases}  
$$
Similarly, 
$$
  {\rm Rank}^{N'}(x, y)  
= \begin{cases}
  {\rm Rank}^{\W N}(O, y-a^{(1)})&\text{if $a^{(1)} \le x$}, \\
  0 &\text{otherwise}.
  \end{cases}  
$$
By \eqref{ThmRk-inv-MN.5}, ${\rm Rank}^{M'}(x, y) = {\rm Rank}^{N'}(x, y)$. Since 
$$
{\rm Rank}^{M}(x, y) = {\rm Rank}^{M'}(x, y) + {\rm Rank}^{M''}(x, y),
$$
$$
{\rm Rank}^{N}(x, y) = {\rm Rank}^{N'}(x, y) + {\rm Rank}^{N''}(x, y)
$$
and ${\rm Rank}^{M}(x, y) = {\rm Rank}^{N}(x, y)$, we get 
${\rm Rank}^{M''}(x, y) = {\rm Rank}^{N''}(x, y)$ 
for every $(x, y) \in (\N^d)^{\le}$. Therefore, 
${\rm Rank}^{M''} = {\rm Rank}^{N''}$.
\end{proof}

We continue the proof of the theorem. Fix $(i_1, \ldots, i_{d-1}) \in \N^{d-1}$.  
Applying Lemma~\ref{rk-inv-MN} to $\W M$ and $\W N$ with 
${\rm Rank}^{\W M} = {\rm Rank}^{\W N}$, we see that the two multisets
$$
\big \{(\la^{(i)})_{i_1, \ldots, i_{d-1}} 
|i \in \Lambda_1' \text{ and } (\la^{(i)})_{i_1, \ldots, i_{d-1}} > 0 \big \}
$$
and 
$$
\big \{(\mu^{(\ell)})_{i_1, \ldots, i_{d-1}} 
|\ell \in \Lambda_2' \text{ and } (\mu^{(\ell)})_{i_1, \ldots, i_{d-1}} > 0 \big \}
$$
are equal, i.e., the two multisets
\begin{eqnarray}    \label{ThmRk-inv-MN.6}
\Big \{\big (a^{(i)}, (\la^{(i)})_{i_1, \ldots, i_{d-1}} \big )
|i \in \Lambda_1' \text{ and } (\la^{(i)})_{i_1, \ldots, i_{d-1}} > 0 \Big \}
\end{eqnarray}
and 
\begin{eqnarray}    \label{ThmRk-inv-MN.7}
\Big \{\big (b^{(\ell)}, (\mu^{(\ell)})_{i_1, \ldots, i_{d-1}} \big )
|\ell \in \Lambda_2' \text{ and } (\mu^{(\ell)})_{i_1, \ldots, i_{d-1}} > 0 \Big \}
\end{eqnarray}
are equal. Applying induction to $M''$ and $N''$ with ${\rm Rank}^{M''} 
= {\rm Rank}^{N''}$, we conclude that 
\begin{eqnarray}    \label{ThmRk-inv-MN.8}
\Big \{\big (a^{(i)}, (\la^{(i)})_{i_1, \ldots, i_{d-1}} \big )
|i \in \Lambda_1-\Lambda_1' \text{ and } 
(\la^{(i)})_{i_1, \ldots, i_{d-1}} > 0 \Big \}
\end{eqnarray}
and 
\begin{eqnarray}    \label{ThmRk-inv-MN.9}
\Big  \{ \big (b^{(\ell)}, (\mu^{(\ell)})_{i_1, \ldots, i_{d-1}} \big )
|\ell \in \Lambda_2-\Lambda_2' \text{ and } 
(\mu^{(\ell)})_{i_1, \ldots, i_{d-1}} > 0 \Big \}
\end{eqnarray}
are equal. Combining \eqref{ThmRk-inv-MN.6}-\eqref{ThmRk-inv-MN.9}, 
we see that the two multisets \eqref{ThmRk-inv-MN.01} and 
\eqref{ThmRk-inv-MN.02} are equal.
\end{proof}

\begin{corollary}  \label{CorRk-inv-MN}
Let $d \ge 1$. 
Let $M$ and $N$ be $\N^d$-indexed persistence modules admitting barcodes:
$$
M = \bigoplus_{i \in \Lambda_1} \T_{a^{(i)}}\kbf_{\la^{(i)}}
\qquad
\text{and}
\qquad
N = \bigoplus_{\ell \in \Lambda_2} \T_{b^{(\ell)}}\kbf_{\mu^{(\ell)}}
$$
where $|\la^{(i)}| \ne 0$ and $|\mu^{(\ell)}| \ne 0$ for all 
$i \in \Lambda_1$ and $\ell \in \Lambda_2$. If ${\rm Rank}^M = {\rm Rank}^N$, 
then the two multisets 
\begin{eqnarray}    \label{CorRk-inv-MN.01}
\Big \{\big (a^{(i)}, (\la^{(i)})_{i_1, \ldots, i_{d-1}} \big )
|(i_1, \ldots, i_{d-1}) \in \N^{d-1}, 
i \in \Lambda_1 \text{ and } (\la^{(i)})_{i_1, \ldots, i_{d-1}} > 0 \Big \}
\end{eqnarray}
and 
\begin{eqnarray}    \label{CorRk-inv-MN.02}
\Big  \{ \big (b^{(\ell)}, (\mu^{(\ell)})_{i_1, \ldots, i_{d-1}} \big )
|(i_1, \ldots, i_{d-1}) \in \N^{d-1}, \ell \in \Lambda_2 \text{ and } 
(\mu^{(\ell)})_{i_1, \ldots, i_{d-1}} > 0 \Big \}
\end{eqnarray}
are equal, and $\sum_{i \in \Lambda_1} |\la^{(i)}| 
= \sum_{\ell \in \Lambda_2} |\mu^{(\ell)}|$.
\end{corollary}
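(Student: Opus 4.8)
The plan is to obtain Corollary~\ref{CorRk-inv-MN} as an immediate bookkeeping consequence of Theorem~\ref{ThmRk-inv-MN}, by aggregating the per-slice multiset equalities over all indices $(i_1, \ldots, i_{d-1}) \in \N^{d-1}$. So I would begin by assuming ${\rm Rank}^M = {\rm Rank}^N$ and invoking Theorem~\ref{ThmRk-inv-MN}: for each fixed $(i_1, \ldots, i_{d-1}) \in \N^{d-1}$, the multiset \eqref{ThmRk-inv-MN.01} equals the multiset \eqref{ThmRk-inv-MN.02}. Now observe that the multiset \eqref{CorRk-inv-MN.01} is, by its very description, the disjoint union (as multisets) of the multisets \eqref{ThmRk-inv-MN.01} over all $(i_1, \ldots, i_{d-1}) \in \N^{d-1}$, and likewise \eqref{CorRk-inv-MN.02} is the disjoint union of the multisets \eqref{ThmRk-inv-MN.02}. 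Since disjoint union of multisets is well-defined and respects multiset equality term by term, taking disjoint unions of the equalities supplied by Theorem~\ref{ThmRk-inv-MN} yields that \eqref{CorRk-inv-MN.01} equals \eqref{CorRk-inv-MN.02}. I would spell out the disjoint-union step explicitly, since a $d$-dimensional partition may have infinitely many nonzero parts (as with $\mu = (+\infty, 4, 2, 1, 1, \ldots)$ in Figure~3), so these aggregated multisets can be infinite; this causes no trouble, because the indexing sets $\Lambda_1$, $\Lambda_2$ are finite and the union is taken over $\N^{d-1}$ regardless of finiteness.

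For the size identity, let $A$ and $B$ denote the multisets \eqref{CorRk-inv-MN.01} and \eqref{CorRk-inv-MN.02} respectively. Since parts equal to $0$ contribute nothing to $|\la^{(i)}| = \sum_{i_1, \ldots, i_{d-1}} (\la^{(i)})_{i_1, \ldots, i_{d-1}}$, summing the second coordinates of $A$ (with multiplicity) gives
$$
\sum_{(a, n) \in A} n = \sum_{i \in \Lambda_1} \sum_{(i_1, \ldots, i_{d-1}) \in \N^{d-1}} (\la^{(i)})_{i_1, \ldots, i_{d-1}} = \sum_{i \in \Lambda_1} |\la^{(i)}|,
$$
and similarly $\sum_{(b, n) \in B} n = \sum_{\ell \in \Lambda_2} |\mu^{(\ell)}|$. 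Having already shown $A = B$, the two sums of second coordinates coincide, hence $\sum_{i \in \Lambda_1} |\la^{(i)}| = \sum_{\ell \in \Lambda_2} |\mu^{(\ell)}|$, with the understanding that this equality is read in $\N \sqcup \{+\infty\}$ (both sides may equal $+\infty$).

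There is essentially no obstacle here: all the substance lies in Theorem~\ref{ThmRk-inv-MN}, and the corollary is a purely formal consequence. The only points meriting a sentence of care are (i) making precise the passage from per-slice multiset equality to equality of the aggregated multisets, noting that multiset union is functorial in each argument, and (ii) keeping track of possible $+\infty$ values, both among the individual parts and in the total sizes, so that the final identity is stated over $\N \sqcup \{+\infty\}$.
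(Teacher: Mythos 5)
Your proposal is correct and follows exactly the route the paper takes: the paper's entire proof of this corollary is ``Follows immediately from Theorem~\ref{ThmRk-inv-MN},'' and your aggregation of the per-slice multiset equalities into a disjoint union over $(i_1,\ldots,i_{d-1}) \in \N^{d-1}$, followed by summing second coordinates to obtain the size identity, is precisely the bookkeeping that phrase elides. Your added care about possibly infinite parts and the reading of the final equality in $\N \sqcup \{+\infty\}$ is a reasonable (and harmless) elaboration.
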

\begin{proof}
Follows immediately from Theorem~\ref{ThmRk-inv-MN}.
\end{proof}

\begin{remark} \label{remarkd=1RkInv}
Recall from Definition~\ref{d-part} that a $1$-dimensional partition is 
of the form $\la = (n)_O$ for some $n \in \N \sqcup \{+\infty\}$. 
Moreover, every $\N$-indexed persistence module admits a barcode. 
Therefore, when $d = 1$, Theorem~\ref{ThmRk-inv-MN} recovers the well-known result 
that the rank invariant and the barcode determine each other uniquely. 
Unfortunately, when $d > 1$, Example~\ref{example_MN} shows that 
the rank invariant of a decomposable $\N^d$-indexed persistence modules 
does not determine the barcode.
\end{remark}

\bigskip\noindent
{\bf Data Availability}

Data sharing is not applicable to this article as no datasets were generated or 
analyzed during the current study.

\bigskip\noindent
{\bf Conflict of Interest}

The authors declare that there is no conflict of interest.

\bigskip\noindent
{\bf Acknowledgment}

The authors would like to thank all the reviewers for reading 
the manuscript carefully and for providing insightful comments.

\end{document}